\newcommand{\pstricks}{./}
\newcommand{\tooManySperners}{.}
\newcommand{\maxCi}{.}
\newtheorem{theorem}{Theorem}[section]
\newtheorem{definition}{Def.}[section]
\newtheorem{remark}{Remark}[section]
\newcommand\conv[1]{\operatorname{conv}(#1)}
\newcommand\realnr{\mathbb{R}}
\newcommand{\vek}[1]{\mathchoice{\displaystyle\boldsymbol{#1}}
{\textstyle\boldsymbol{#1}}{\scriptstyle\boldsymbol{#1}}
{\scriptscriptstyle\boldsymbol{#1}}}
\def\norm#1{  \left\| #1 \right\|}
\DeclareMathOperator*{\argmax}{arg\,max}
\newcommand{\simp}{\mathcal S}
\begin{document}

\selectlanguage{\english}


\title{Global Solver based on the Sperner-Lemma and Mazurkewicz-Knaster-Kuratowski-Lemma based proof of the Brouwer Fixed-Point theorem}

\author{Thilo Moshagen 
$^{\ast}$\thanks{$^\ast$Corresponding author. Email: thilo.moshagen@hs-wismar.de
\vspace{6pt}} \\ 
{\em{Hochschule Wismar, Germany}}}

\maketitle
\abstract{In this paper a fixed-point solver for mappings from a Simplex into itself that is gradient-free, global and requires $d$ function evaluations for  halving the error is presented, where $d$ is the dimension. It is based on topological arguments and uses the constructive proof of the Mazurkewicz-Knaster-Kuratowski lemma when used as part of the proof for Brouwer's Fixed-Point theorem.  }

\paragraph{Keywords:}
topological solver, bisection solver, root finding, Brouwer's Fixed Point Theorem, Mapping Degree, Sperner Lemma, Knaster-Kuratowski-Mazurkewicz-Lemma

\section{Overview}
Let
\begin{align}\label{eq:injection}
\vek F:\quad\realnr^d\supset \simp :=\text{conv}(\vek v_0,...\vek v_d)&\longrightarrow  \simp=\text{conv}(\vek v_0,...\vek v_d)
\end{align}
be a continuous mapping from $\text{conv}(\vek v_1,...\vek v_d)$ into itself. Brouwer's Fixed Point theorem for simplices states that   there is a Fixed Point 
\begin{equation}\vek F(\vek x)=\vek x.
\end{equation} 
The popular proof of it (see section \ref{sec:brouwerFP}) using the  Knaster-Kuratowski-Mazurkewicz lemma (\emph{KKM-lemma}, Theorem \ref{th:kkm}) uses points' change of distance to simplex corners under the mapping - points whose distance to all corners is not reduced are fixed points. The KKM-lemma's proof contains an algorithm to find those points.\\ 
In the Appendix \ref{sec:injectivity} it is shown that most zero search problems can be transformed into the above Fixed Point shape and are in the scope of this contribution.

\subsection{Outline of Working Principles }

\begin{figure}[h!tb]
\begin{center}
\includegraphics[width=0.4\textwidth]{\pstricks/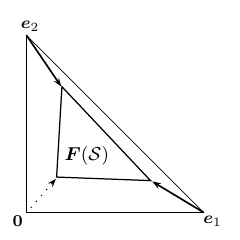}
\includegraphics[width=0.4\textwidth]{\pstricks/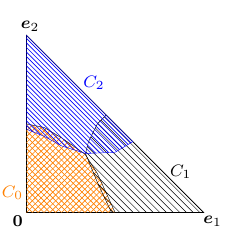}
\caption{\label{fig:cigeneral} Right: Sets $C_i$ of points that are mapped not closer to corner $\vek v_i$ by some mapping (left). The intersection of all $C_i$ are fixed points. }
\end{center}
\end{figure}
The working principle of the algorithm suggested  in this paper as well as the proof of Brouwer's Fixed-Point theorem used for it can be visualized as follows: Let the $\lambda_i(\vek u)$ denote the components of the barycentric coordinates of a point $\vek u$ with respect to the corners $\{ \vek v_j\}_{j=0,...,d}$. The bigger $\lambda_i(\vek u)$, the closer  $\vek u$ is to corner  $\vek v_i$. Thus, the sets 
\begin{equation}\label{eq:C_i}
C_i=\left\{\vek u \in S : \lambda_i(\vek F(\vek u))\le\lambda_i(\vek u)\right\}
\end{equation}
are the sets of points that are mapped away or equally far from corner $\vek v_i$, equivalently, not closer to  $\vek v_i$. A point that is mapped not closer to any corner is a fixed point, so all points in the intersection of all $C_i$ are fixed points.\\
Assume the $C_i$ look like in Fig. \ref{fig:cigeneral}. To find the fixed point, bisect the simplex, from the emerging simplices choose one with corners in all $C_i$ which always exists as Fig. \ref{fig:cigeneral} suggests and  the Knaster-Kuratowski-Mazurkewicz-Lemma (KKM) (Th. \ref{th:kkm} shows, bisect it again and so on. 

\subsection{Use of the Knaster-Kuratowski-Mazurkewicz-Lemma}

We utilize the proof of Brouwers Fixed Point Theorem using the Sperner-Lemma ( Th. \ref{th:sperner}) and KKM Lemma (Th. \ref{th:kkm}) \cite[ pp 60]{zeidler95}. This proof is constructive: According to the KKM-Lemma, a simplex with a set of closed sets $\left\{C_i\right\}_{i=0,..,d}$ with the property that each generalized face   $\text{conv}(\vek v_{i_1},...\vek v_{i_n})$ is covered by the union of the $C_i$ with the same indices, i.e. $\vek v_1\in C_1$, $\text{conv}(\vek v_1, \vek v_3)\subset  C_1\cup C_3$ et cetera, has at least one point that is in all $C_i$. From this follows Brouwers Fixed Point Theorem by considering the 
sets as defined in Eq. \eqref{eq:C_i}
where the $\lambda_i$ denote the components of the barycentric coordinates. In words, $C_i$ are the points that are mapped further away from, precisely: not nearer  to the corner $\vek v_i$. They have a common point according to the KKM Lemma, and an image point that was not mapped nearer to any corner is a fixed point.\\
Thus,  if each corner of a simplex of a triangulation $\simp_j$ is  in a different $C_i$,  a fixed point   must be close to it.
The suggested algorithm's advantages are that it is gradient-free, finds one solution globally and may find further solutions  while it might miss  solutions under certain circumstances, needs a low amount of function evaluations and is explorative, by which we mean that the data produced gives a good idea of the examined function, in particular is suitable to be used for regression.

\section{Algorithm}
\subsection{Basic Algorithm}

The suggested procedure is based on the proof of the KKM-Lemma (Theorem \ref{th:kkm}) using the Sperner-Lemma (Theorem \ref{th:sperner}). 
For any triangulation $\left\{S_k\right\}$ of $S$, the first triangulation possibly being $\simp$ itself, the nodes of the triangulation are indexed with the $i$ from the $C_i$ that contains them, possibly more than one. If a simplex has a corner in each of the $C_i$, so its corners can be numbered from 0 to $d$ by for each corner picking one of its indices, 
the simplex is called a \emph{Sperner} simplex (Definition \ref{th:defSperner}) and contains or is close to the fixed point. \\
Now that simplex is refined by adding a new point. It again is indexed by the sets $C_i$ it lies in. Thus an odd number (at least one) of the simplices of the refined triangulation again is Sperner by the Sperner Lemma, and as it is the Sperner simplex that has been divided, at least one of the newly emerging simplices is Sperner. So for a divided Sperner Simplex an odd number of - at least one -  new Sperner simplices emerge. 
The procedure is repeated and provides a sequence of simplices 
whose corners are mapped less far away from the corners of $S$ than their preimage:

\begin{algorithm}
	\caption{Basic scheme of the Knaster Solver} 
	\begin{algorithmic}[1]
		\State Given: Function $\vek F$, Initial Simplex $\simp$. Eventually make initial globally refined triangulation $\{\simp_j\}$.
		\State evaluate $\vek F$ on all vertices $\vek v_i$
		\State evaluate $C_i$ membership on all vertices $\vek v_i$
		\For {$bisectionSteps=1,2,\ldots$}
			\For {Subsimplices $\simp_j\in \{\simp_i\} $}
				\If { $\simp_j$ is Sperner }
				\State  bisect $\simp_j$ and neighbor
				\State on new vertex evaluate $\vek F$ 
				\State  on new vertex evaluate $C_i$ membership 
				\EndIf
			\EndFor
		\EndFor
	\end{algorithmic} 
\end{algorithm}
\subsection{Details of the Algorithm}\label{sec:detailsOfAlgo}
A \emph{consistent} refinement algorithm  for arbitrary dimensions is needed, which means that a node inserted on an edge of a simplex should become corner of all simplices emerging from the refinement. That is, there should be no hanging nodes. Furthermore, it should not produce small angles between edges as this would mean that already short edges are refined before longer ones. Preferably,  it should reduce the maximum edge length in each step. \\
\begin{algorithm}
	\caption{\label{algo:knaster} Knaster Solver. For explanations see \ref{sec:detailsOfAlgo}} 
	\begin{algorithmic}[1]
		\State Given: Function $\vek F$, Initial Simplex $\simp$. Eventually make initial triangulation $\{\simp_j\}$.
		\State vectors of edges, edge ages, memberships of simplices for each node.
		\State Initialize edge age vector, edges $(\vek 0, \vek e_i)$ with 0 and $(\vek e_i, \vek e_j)$ with 1
		\State evaluate $\vek F$ on all vertices $\vek v_i$
		\State evaluate $C_i$ membership on all nodes $\vek v_i$ and Sperner Property on all $ \simp_j $
		\For {$bisectionSteps=1,2,\ldots$}
			\State Increase EdgeAges by 2
			\For {Edges $(\vek u_i, \vek u_j)$}
			 	\If {edgeAge>3+ageCount}
				\If {Any of Subsimplices $\simp_k$ with $(\vek u_i, \vek u_j)\in \simp_k$ is Sperner}
					\State create $\vek u_\text{new}$ in the middle of oldest edge, add it to node list, 
					\State on new Point evaluate $\vek F$ and $C_i$ membership 
					\State add  $(\vek u_i, \vek u_\text{new})$,  $(\vek u_\text{new}, \vek u_j)$ to edge list, set their edge age to 0
					\State  bisect all $\simp_j$ that shared the split edge
					\State add new simplices to simplex list, 
					\State add new edges to edge list, set their edge age to 1.
					\State refFlag=1
				\EndIf
				\EndIf
			\EndFor
			\If {refFlag}
			\State ageCount = ageCount+1
			\Else
				\For {Edges $(\vek u_i, \vek u_j)$}
			 		\If {edgeAge>2+ageCount}
						\State bisect around edge as above 
					\EndIf
				\EndFor
			\EndIf
		\EndFor
	\end{algorithmic}
	\label{algoII} 
\end{algorithm}
These requirements are met by an algorithm that places a point in the middle of the longest edge of a simplex marked for refinement, and then splits all simplices that share this edge (Algorithm \ref{algoII})

 The \emph{edge age} value of 0 is assigned to the bisected edge only, age one is assigned to all new edges that are shorter than those of the old Sperner simplex, but not halved. All other edge's age are incremented by two. Thus, it is guaranteed that a newly bisected edge is the last in the file of  edges that are candidates for refinement. In this form, the algorithm converges as described in section \ref{sec:convergence}.\\
 The \emph{age count} construction  establishes that the algorithm does not run idly until edges grow old enough for refinement.

\subsection{Situations that make the algorithm miss Fixed Points}
The algorithm detects $\simp_j$ with corners in all $C_j$. The fixed point is where the boundaries of all $C_i$'s  meet \eqref{eq:C_iboundary}. An even number of fixed points is missed by the algorithm if a bounded subset of $C_i$ is subset of a $\simp_j$ (Figure \ref{fig:situations}, right). In general, a fixed point might be missed in all situations where for one $C_i$ $C_i\cap \simp_j\neq \emptyset$ but $C_i$ contains no corner of $\simp_j$. In these two situations  initial refinement of the triangulation may reveal further fixed points. \\
The algorithm is guaranteed to find one fixed point and likely to find a good part of the fixed points.
A fixed point may  not be inside the Sperner simplex that it causes. Nevertheless, the sequence of Sperner simplices approaches the fixed point (Figure \ref{fig:situations}, middle).
\begin{figure}
\includegraphics[width=0.3\textwidth,trim= 4cm 3.0cm 4cm 3.0cm,clip]{./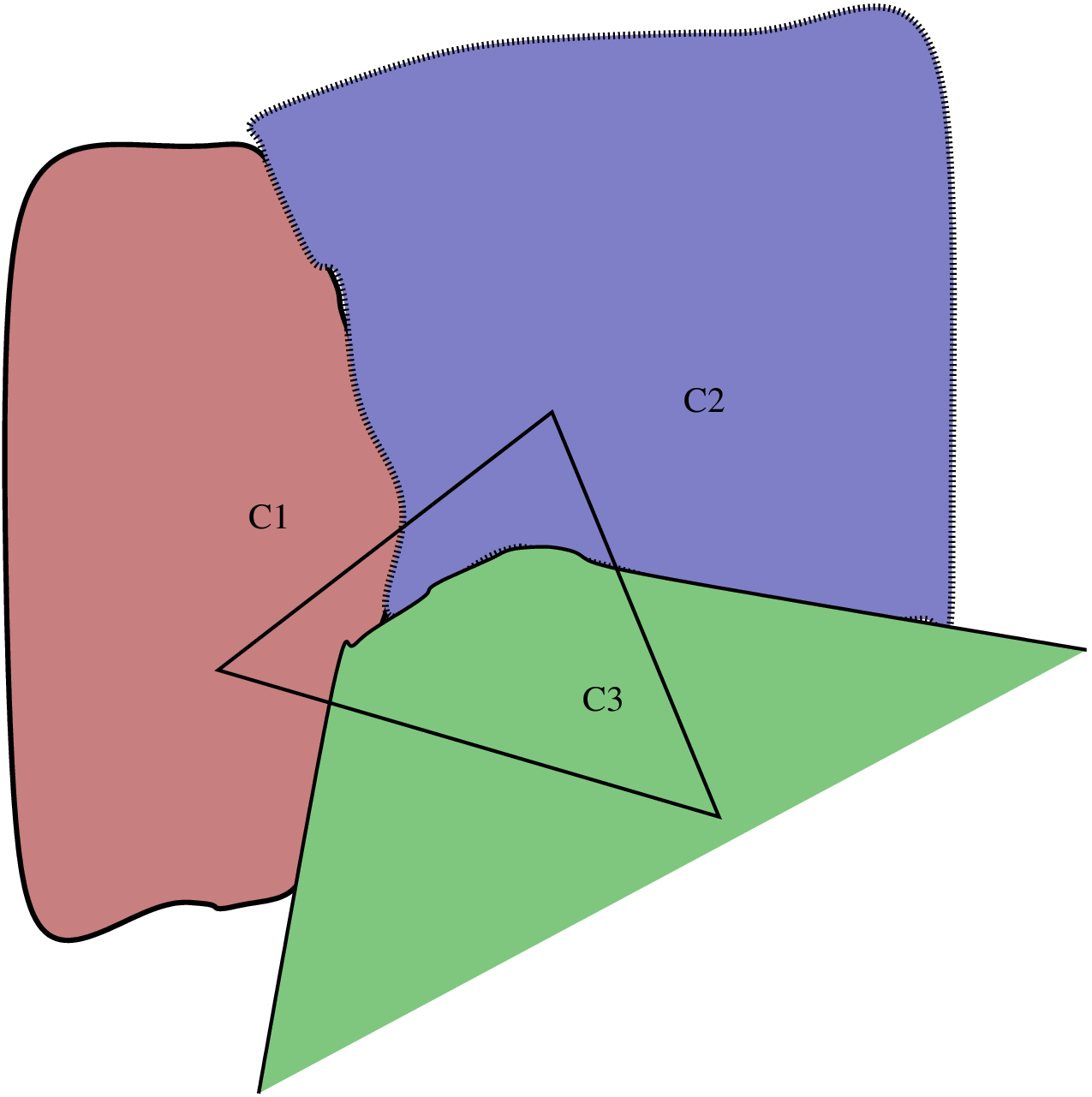}
\includegraphics[width=0.3\textwidth,trim= 4cm 3.0cm 4cm 3.0cm,clip]{./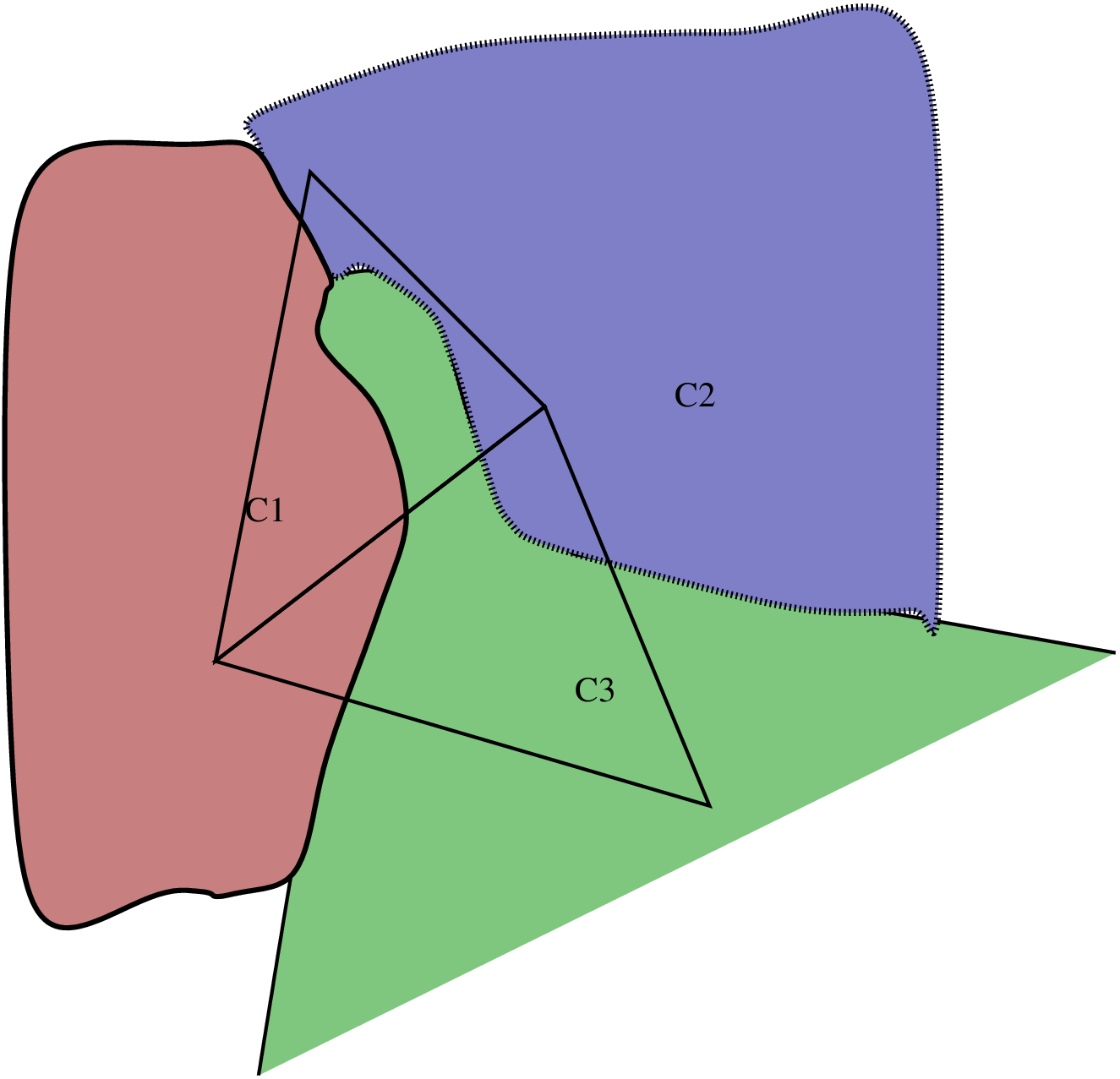}
\includegraphics[width=0.3\textwidth,trim= 3.6cm 2.1cm 3.6cm 3cm,clip]{./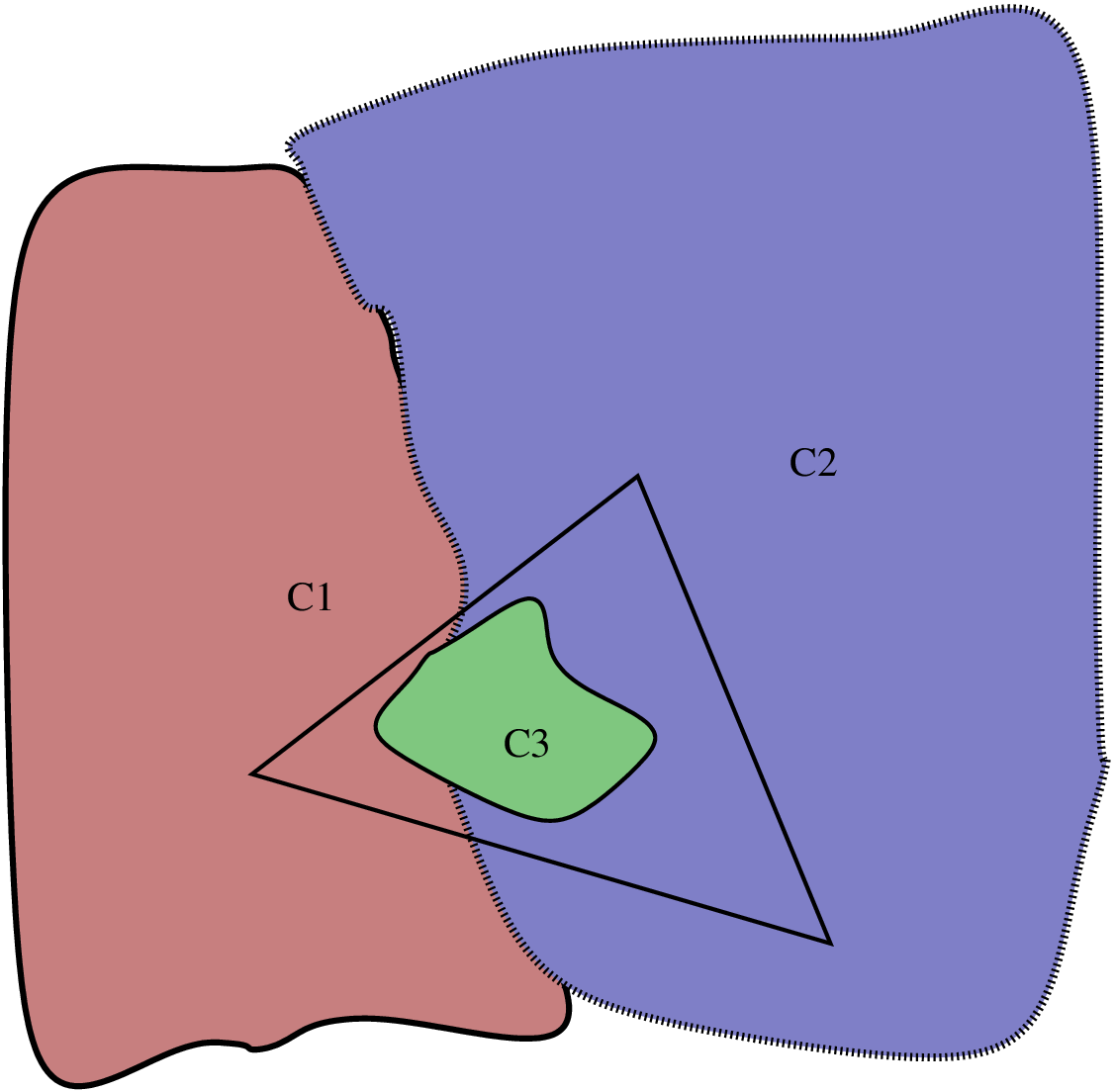}
\caption{\label{fig:situations}Left: The fixed point is inside the Sperner simplex and will be in such after refinement. Middle:  The fixed point is not inside the Sperner simplex, but in the non-Sperner above. Right: A simplex is non-Sperner and contains a pair number of fixed points.}
\end{figure}

\subsection{Convergence}\label{sec:convergence}
We denote by error the maximum distance between possible positions of the fixed point.\\

The algorithm yields in the best case as many sequences $(\simp_i)_{i=1,...}$ of Sperner simplices as there are fixed points. They obey $\simp_i\cap\simp_{i+1}\neq \emptyset$, as for each divided Sperner simplex a new one emerges, but  not necessarily $\simp_i\supset\simp_{i+1}$. Thus it cannot be generally  said that $\simp_{i+1}$ has half the volume as $\simp_i$ and a reduced maximum diameter. This makes a  general convergence and computational cost estimate difficult. But recall that convergence statements for Newton-like solvers require an
initial proximity of starting point and zero as well. Thus, a  convergence and computational effort statement is given for the situation $\simp_i\supset\simp_{i+1}$, which is shown in Fig. \ref{fig:situations}, left image:\\
For simplicity and without loss of generality (by affine mapping), we consider the simplex with $\vek v_0=\vek 0$ and $\vek v_i= \vek e_i$. Independent of the dimension, the edges containing $\vek 0$ are of  length 1 and those between $\vek v_i$ and $\vek v_j$ are of length $\sqrt{2}$. The longest edges are refined first and this behavior is inherited to the child simplices by the edge age mechanisms as long as it is always a child and not a neighbour that is the emerging Sperner simplex. 

\begin{figure}[h!tb]
\center{\includegraphics[width=0.3\textwidth]{\pstricks/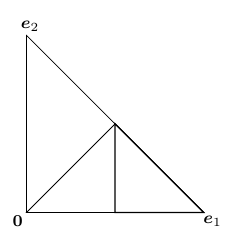}}
\caption{\label{fig:simplex2d} The $2-d$ unit simplex refined twice.}
\end{figure}
With these facts and circumstances, the two-dimensional unit simplex needs to be refined twice 
to halve all edge's lenghts, as Fig \ref{fig:simplex2d} shows. To prove this statement, 
first observe that it is independent on which subsimplex is refined again due to symmetry of 
the division. Further, the subsimplex containing $\vek e_1$ yielding from the refinement is 
congruent to the original, as one of its edges lies on the  line  $\vek 0, \vek e_1$ and 
 one on the  line  $\vek e_1, \vek e_2$, so in original direction, and those edges have half the length of the edges they  originate from, thus the two mentioned new points 
 result from moving $\vek 0$ and  $\vek e_2$ 
half the distance in direction towards  $\vek e_1$, which is congruency.

\begin{figure}[h!tb]
\center{\includegraphics[width=0.24\textwidth]{\pstricks/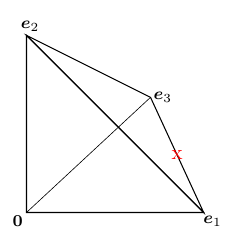}
\includegraphics[width=0.24\textwidth]{\pstricks/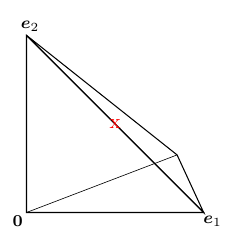}
\includegraphics[width=0.24\textwidth]{\pstricks/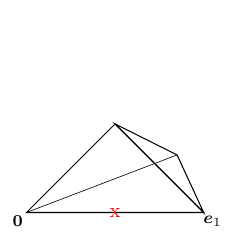}
\includegraphics[width=0.24\textwidth]{\pstricks/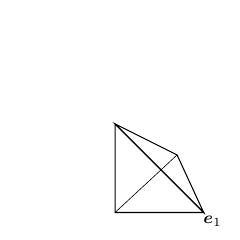}
}
\caption{\label{fig:simplex3d} The $3-d$ unit simplex refined three times.}
\end{figure}

For the 3-dimensional unit simplex $\conv{\vek 0, \vek e_1, \vek e_2, \vek e_3}$ an edge containing $\vek e_3$ is refined, without loss of generality $\conv{\vek e_1, \vek e_3}$.  The new edge has the same direction as $\conv{\vek e_1, \vek e_3}$ and half the length.\\
Then the 2d-face $\conv{\vek 0, \vek e_1, \vek e_2}$ is refined, and as said for 2-d-Simplices, after two refinements yielding a 2d-subsimplex which is congruent to $\conv{\vek 0, \vek e_1, \vek e_2}$ and has half the edge lenghts. Without loss of generality let $\vek e_1$ be corner of the emerging subsimplex. Its edges even have the same directions as those of $\conv{\vek 0, \vek e_1, \vek e_2}$. The new edge yielding from dividing $\conv{\vek e_1, \vek e_3}$ as well has old direction, so there are three edges with original direction and half length, meaning that each of their three end points can be produced by moving one $\vek e_i$, $i\neq 1$ half the way towards $\vek e_1$. 
Thus  
the refined simplex is congruent to the initial simplex, scaled by $\frac12$. Completing the proof, this holds for all possible subsimplices emerging by three refinements as specified due to the symmetry of the bisection, so by three consecutive refinements all edge lengths are halved.

We generalize this result to $d$ dimensional simplices by induction:
\begin{remark} 
The following result is unlikely to be new, yet, we could not find it anywhere.
\end{remark}
\begin{theorem}\label{th:edgeLengths}
If a unit simplex  $\conv{\vek 0, \vek e_1, ... \vek e_d}$ is refined such that a new node is inserted always on one of the longest edges, then after $d$ refinements the length of all edges is halved.
 \end{theorem}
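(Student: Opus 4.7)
My plan is to proceed by induction on $d$, the base case $d=2$ being already established. For the inductive step, I would first observe that the longest edges of $\simp = \conv{\vek 0, \vek e_1, \ldots, \vek e_d}$ are the $\binom{d}{2}$ edges $\conv{\vek e_i, \vek e_j}$ of length $\sqrt{2}$, while the edges $\conv{\vek 0, \vek e_i}$ have length $1$. By the symmetry of $\simp$ under permutations of $\{\vek e_1, \ldots, \vek e_d\}$, I would assume without loss of generality that the first bisection is on $\conv{\vek e_1, \vek e_d}$, with midpoint $\vek m = \tfrac{1}{2}(\vek e_1 + \vek e_d)$, and track the sub-simplex $\simp_1 = \conv{\vek 0, \vek e_1, \ldots, \vek e_{d-1}, \vek m}$ containing $\vek e_1$. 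The decisive observation is that $\simp_1$ has the $(d-1)$-dimensional face $F = \conv{\vek 0, \vek e_1, \ldots, \vek e_{d-1}}$, itself a unit simplex in one dimension lower, with $\vek m$ as the opposite vertex.

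The next step of the plan is to perform the remaining $d-1$ bisections entirely within $F$, at each stage tracking the sub-face containing $\vek e_1$. By the inductive hypothesis (applied to $F$ with designated pivot $\vek e_1$), after these $d-1$ bisections the tracked sub-face $F_d$ becomes congruent to $F$ scaled by $1/2$ around $\vek e_1$, with vertex set $\{\vek e_1\} \cup \{\vek m_{1j} : \vek v_j \in \{\vek 0, \vek e_2, \ldots, \vek e_{d-1}\}\}$, where $\vek m_{1j} = \tfrac{1}{2}(\vek e_1 + \vek v_j)$. Appending the opposite vertex, the tracked $d$-dimensional sub-simplex $\simp_d = \conv{F_d \cup \{\vek m\}}$ has vertex set $\{\vek e_1, \vek m_{01}, \vek m_{12}, \ldots, \vek m_{1d}\}$, which is precisely the image of the vertex set of $\simp$ under the homothety $\phi(\vek v) = \tfrac{1}{2}(\vek v + \vek e_1)$ of ratio $1/2$ centered at $\vek e_1$. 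Hence $\simp_d$ is congruent to $\simp$ scaled by $1/2$, and all its edge lengths are exactly halved.

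The hard part will be confirming that each of these inner $d-1$ bisections is genuinely a longest-edge bisection of the full $d$-dimensional sub-simplex $\simp_k$, not merely of $F_k$: that is, no edge from $\vek m$ to a vertex of $F_k$ is longer than the longest edge within $F_k$. Since every vertex $v$ of $F_k$ lies in the hyperplane $\{x_d = 0\}$ while $\vek m$ has $d$-th coordinate $1/2$, orthogonal decomposition gives
\[
\norm{\vek m - v}^2 = \norm{\tfrac{1}{2}\vek e_1 - v}^2 + \tfrac{1}{4},
\]
and the right-hand side is at most $\tfrac{5}{4} + \tfrac{1}{4} = \tfrac{3}{2}$ (the bound $5/4$ on the first term being attained over $F$ at $v = \vek e_i$, $i \ge 2$). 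As long as $F_k$ still contains some $\vek e_i$ with $i \ge 2$, $F_k$ retains the $\sqrt{2}$-edge $\conv{\vek e_1, \vek e_i}$, which strictly dominates $\sqrt{3/2}$; and in the final stage, once only $\vek 0$, $\vek e_1$, and midpoints $\vek m_{1\cdot}$ remain in $F_k$, a direct computation gives $\norm{\vek m - v} = \sqrt{1/2}$ for every such $v$, which is at most the length-$1$ edge $\conv{\vek 0, \vek e_1}$ of $F_k$. This closes the induction.
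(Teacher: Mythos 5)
Your proof is correct and follows essentially the same inductive route as the paper's: peel off one bisection of a $\sqrt{2}$-edge meeting $\vek e_d$, apply the induction hypothesis to the opposite $(d-1)$-face, and identify the tracked subsimplex as the image of $\simp$ under the homothety of ratio $\tfrac12$ centered at the pivot vertex. The one substantive difference is that you explicitly verify, via the orthogonal decomposition $\norm{\vek m - v}^2 = \norm{\tfrac12\vek e_1 - v}^2 + \tfrac14$, that the edges to the apex $\vek m$ never become longest, so the inner $d-1$ bisections really are longest-edge bisections of the full-dimensional subsimplex and not merely of its front face --- a consistency check the paper's proof takes for granted.
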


\begin{proof}(By induction.)
The claim is true for $d=2$ an $d=3$. Assume it is true for $d-1$.\\
The $d$-dimensional simplex $\conv{\vek 0, ... \vek e_d}$ is initially refined by introducing a new vertex, without loss of generality on edge $\conv{\vek e_{d-1}, \vek e_d}$.\\
Then the $d-1$-face $\conv{\vek 0,... \vek e_{d-1}}$, we shall call it the front side, is refined, and according to the induction prerequisite, all  $d-1$-subsimplices that emerge  after $d-1$ refinement steps are congruent to the front face. We show that one of the emerging $d$-subsimplices is congruent to the original - then all are due to the symmetry of the bisection. Consider the emerging $d$-subsimplex containing $\vek e_{d-1}$. Its face in the front side $\conv{\vek 0,... \vek e_{d-1}}$ is congruent scaled by $\frac12$ to the front side by the prerequisite, and its edge in direction $\vek e_d$ is trivially in direction of its pre-refinement ancestor $\conv{\vek e_{d-1}, \vek e_d}$. Thus, the end vertices of $d$ edges are on $d$ lines identical with the lines through $\vek e_d$ 
$\vek 0 , \vek e_1, ..., \vek{ e}_{d-2}$ and $\vek e_d$, 
with $\frac12$ the original distance, meaning that the original simplex has been scaled by $\frac12$, which is the congruency of the subsimplex with the original.
\end{proof}
Applied to the sequence of Sperner simplices yielding from  the suggested algorithm, this is:
\begin{theorem}\label{th:convergence}
In a sequence of Sperner simplices from Algorithm \ref{algoII} with $\simp_i\supset \simp_{i+1}$, after $d$ steps requiring $d$ function evaluations the error is halved.\\
 \end{theorem}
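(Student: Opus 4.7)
The plan is to reduce Theorem \ref{th:convergence} to the already established Theorem \ref{th:edgeLengths} by a normalization argument. First I would apply an affine transformation carrying the current Sperner simplex $\simp_i$ to the standard unit simplex $\conv{\vek 0, \vek e_1, \ldots, \vek e_d}$; this is legitimate because the edge-age bookkeeping depends only on the history of bisections inside $\simp_i$, not on absolute lengths, and the longest-edge rule is invariant under affine maps in the sense that ratios of edge lengths are preserved. Under the standing hypothesis $\simp_i \supset \simp_{i+1}$ (the emerging Sperner simplex is always a child rather than a neighbour) each bisection step inserts exactly one new vertex, which is evaluated once, so the bookkeeping ``$d$ steps $=$ $d$ function evaluations'' is immediate.

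The core task is then to verify that, within $\simp_i$, the edge-age mechanism of Algorithm \ref{algoII} effectively emulates the longest-edge-first scheme that Theorem \ref{th:edgeLengths} presupposes. I would track the invariant that among edges currently living in $\simp_i$ the older ones are the longer ones: after the initialization in line 3 of the algorithm, the short legs $\conv{\vek 0, \vek e_j}$ have age $0$ and the longer diagonals $\conv{\vek e_j, \vek e_k}$ have age $1$; a bisection assigns age $0$ to the newly split edge, age $1$ to each of its two halves (both strictly shorter than the parent), and increments all other ages by $2$. A short induction then shows the invariant survives each bisection, so the sequence of edges chosen by the algorithm coincides with the longest-edge bisection sequence analysed in Theorem \ref{th:edgeLengths}.

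With this identification in place I would apply Theorem \ref{th:edgeLengths} directly in the rescaled picture: after $d$ refinements the subsimplex $\simp_{i+d}$ is congruent to $\simp_i$ scaled by $\frac12$, so every edge length, and in particular the diameter, is halved. Since by construction the fixed point lies within (or is sandwiched by) the Sperner simplex $\simp_{i+d}$, the error, defined as the maximum distance between possible positions of the fixed point, is bounded by the diameter of $\simp_{i+d}$ and is therefore at most half the error before the $d$ steps. Undoing the affine normalization preserves this ratio, yielding the claim.

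The main obstacle I anticipate is precisely the verification that the age-count mechanism preserves the ``older $=$ longer'' invariant in every configuration. The description of the edge-age and \emph{ageCount} rules in Section \ref{sec:detailsOfAlgo} is somewhat terse, and it is conceivable that when the emerging Sperner simplex is a neighbour rather than a child, the global age counter can desynchronize the local ordering inside a given $\simp_i$; the hypothesis $\simp_i\supset\simp_{i+1}$ is used precisely to rule this out, but I would want to make the invariant into a formal auxiliary lemma before invoking Theorem \ref{th:edgeLengths}.
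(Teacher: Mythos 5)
Your proposal takes essentially the same route as the paper: the paper gives no separate proof of Theorem \ref{th:convergence} beyond the phrase ``Applied to the sequence of Sperner simplices\dots'', i.e.\ it is obtained exactly as you describe, by affine normalization to the unit simplex, by noting that the edge-age mechanism realizes longest-edge-first bisection under the hypothesis $\simp_i\supset\simp_{i+1}$, and by invoking Theorem \ref{th:edgeLengths} together with the one-new-vertex-per-step accounting. The only slip is a swapped detail in Algorithm \ref{algoII}: the two \emph{halves} of the split edge receive age $0$ and the other new (shorter-but-not-halved) edges receive age $1$, not the other way around as you wrote; this does not affect your argument.
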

 
\subsection{Two Examples}

\begin{figure}[h!tb]
\includegraphics[width=0.5\textwidth]{\pstricks/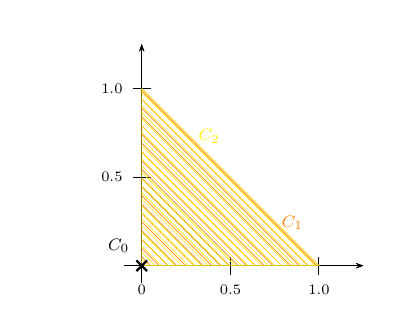}
\includegraphics[width=0.5\textwidth]{\tooManySperners/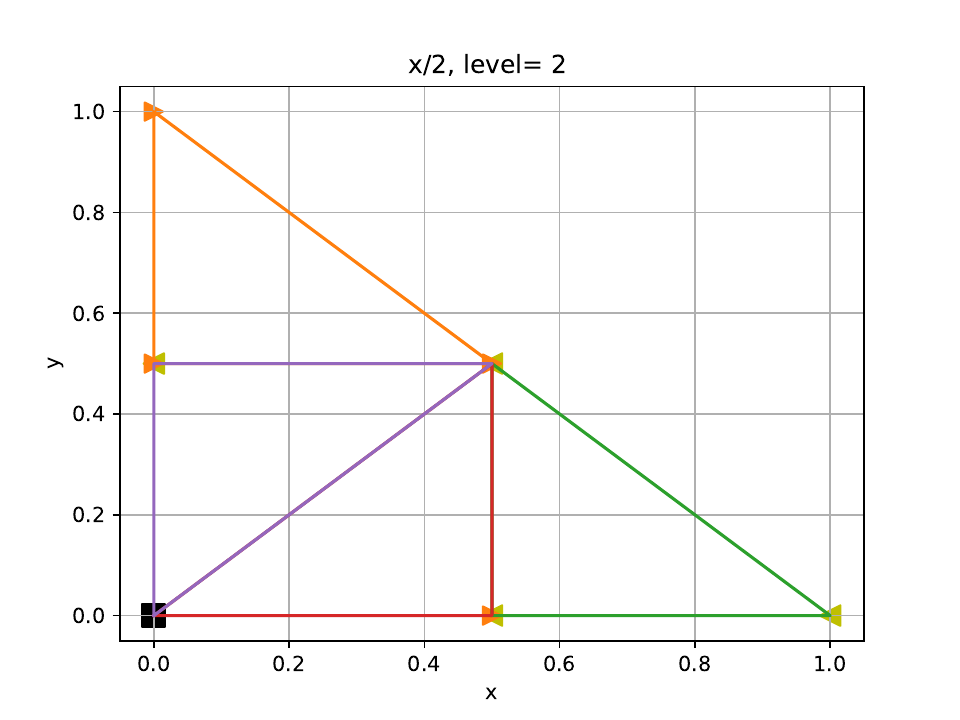}\\
\includegraphics[width=0.5\textwidth]{\tooManySperners/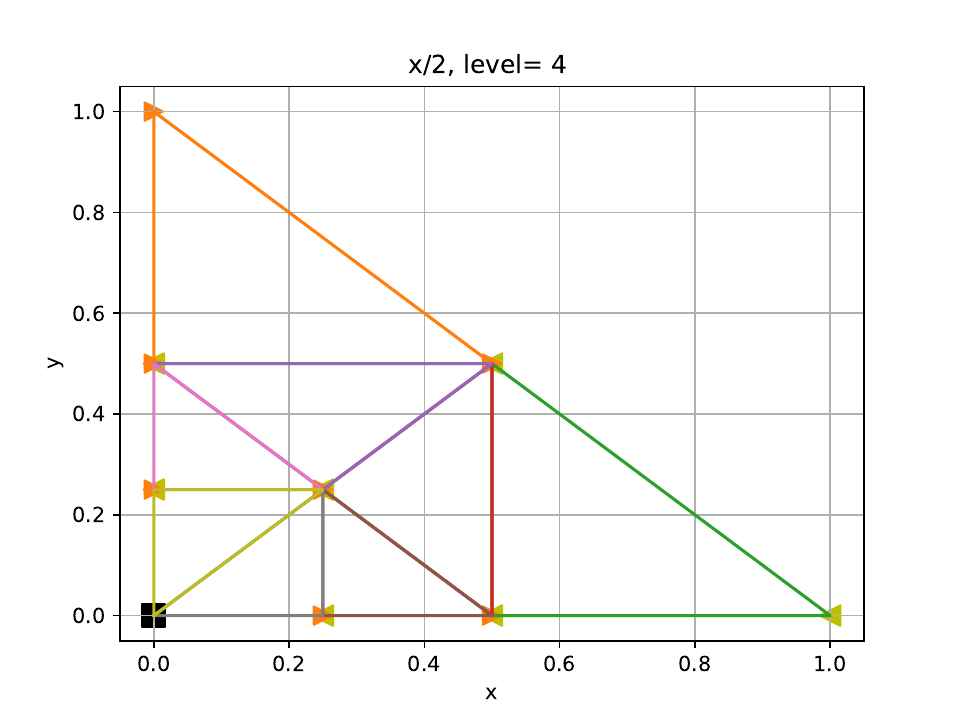}
\includegraphics[width=0.5\textwidth]{\tooManySperners/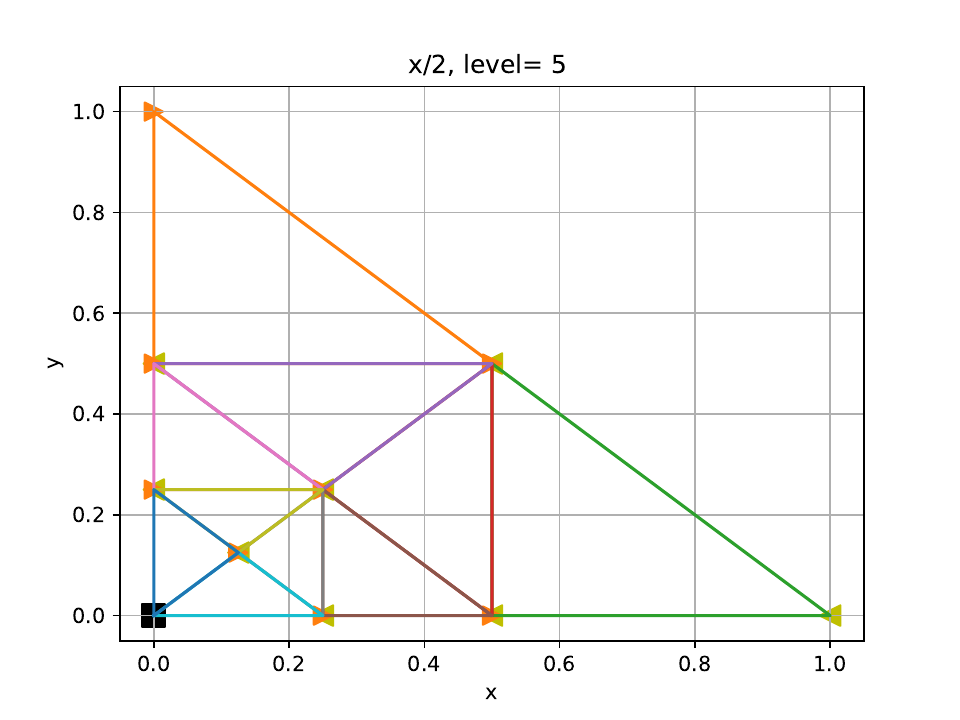}
\caption{\label{fig:xHalf2d}Above left: The $C_i$ of example in Eq.\eqref{eq:x/2}: $(x,y)^\intercal  \mapsto \frac12 (x,y)^\intercal$. All $\vek x$ are mapped farther away from $(1,0)^\intercal$, so in $C_1$ (orange), same with $C_2$ (yellow). Only the origin is mapped not closer to $(0,0)^\intercal$, so in $C_0$ (black), and is the fixed point.   Right and below: Knaster algorithms refinement steps 2, 4 and 5 for example \eqref{eq:x/2}. The coloured markers show the $C_i$ memberships: All points are in $C_1$ and $C_2$, so orange and yellow, only the origin is in $C_0$ as well and black. }
\end{figure}

The mapping 
\begin{align}\label{eq:x/2}
\vek F:\quad\realnr^2\supset \simp:=\text{conv}(\vek e_1,\vek e_2) &\longrightarrow  \simp\\
(x,y)^\intercal & \mapsto \frac12 (x,y)^\intercal
\end{align}
has the fixed-point $(0,0)^\intercal$. The predicted convergence results from Th. \ref{th:convergence} is fully met (Fig. \ref{fig:xHalf2d}).\\
The sets $C_i$, the sets of points that are mapped not farther away from corner $i$ under the mapping, are given in the same figure. Indeed, fixed-point $(0,0)^\intercal$ would be found at once if tested for its $C_i$ memberships, anyway this is an useful example for its simplicity.\\
\begin{figure}[h!tb]
\includegraphics[width=0.5\textwidth]{\pstricks/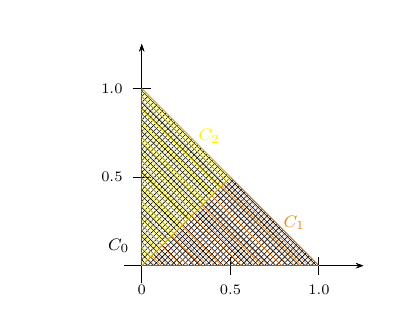}
\includegraphics[width=0.5\textwidth]{\tooManySperners/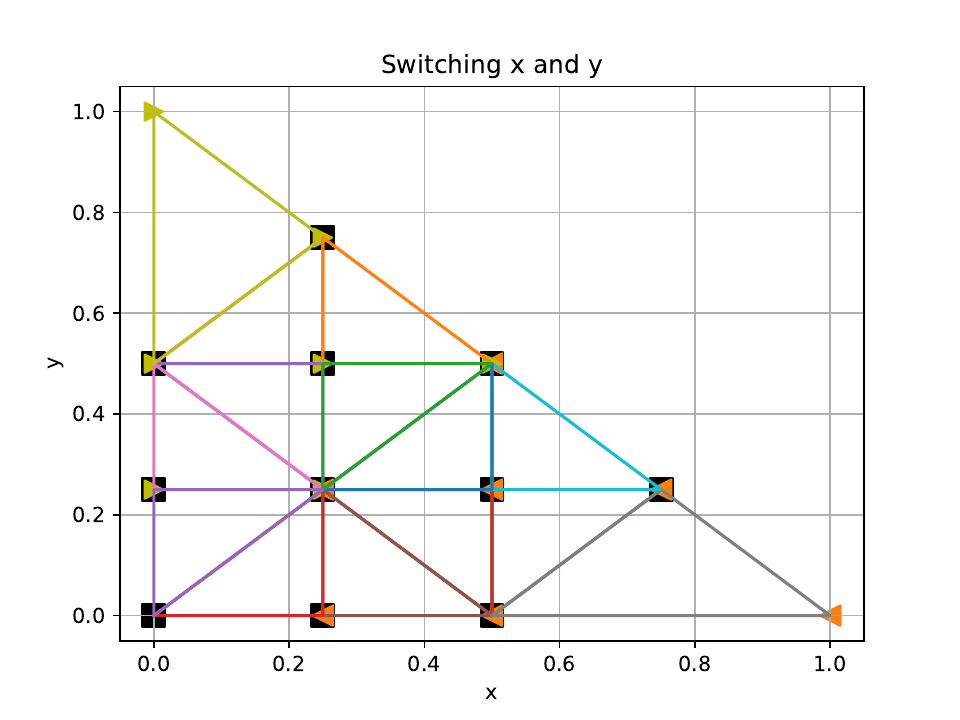}
\includegraphics[width=0.5\textwidth]{\tooManySperners/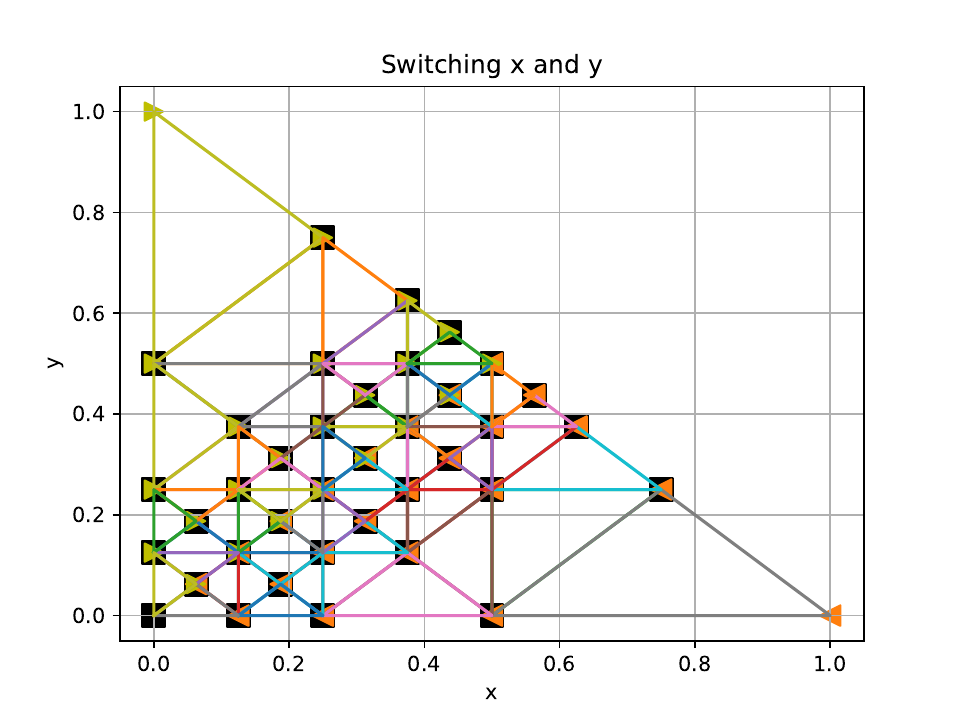}
\includegraphics[width=0.5\textwidth]{\tooManySperners/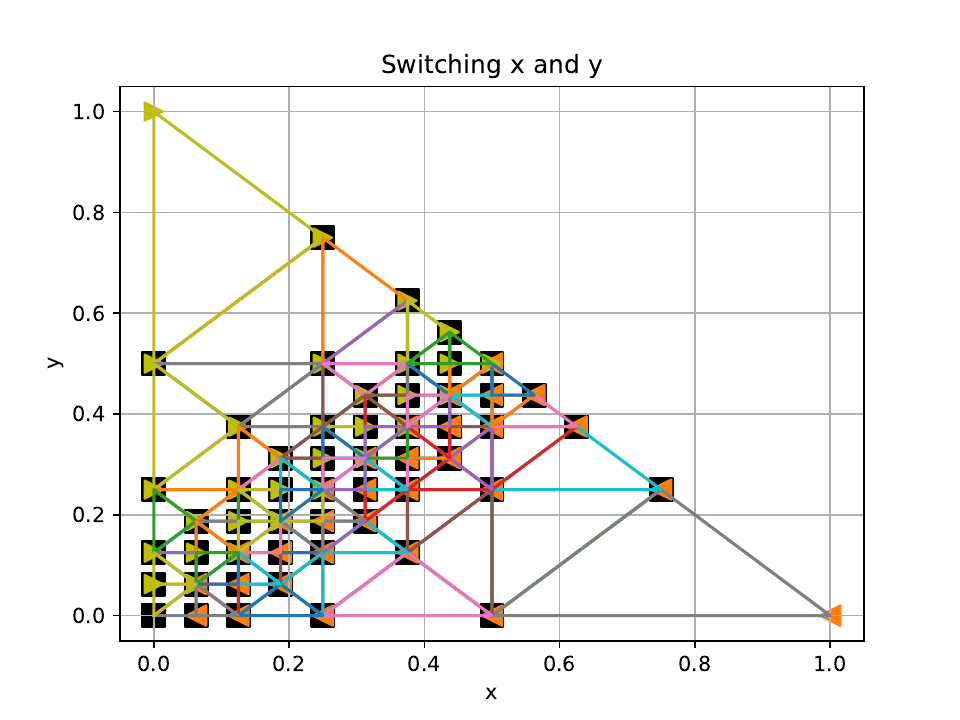}
\caption{\label{fig:switchxy}Above left: The $C_i$ of example in Eq. \eqref{eq:switchxy}: $(x,y)^\intercal  \mapsto  (y,x)^\intercal$ (switching $x$ and $y$). No point moves away from the origin, so all $\vek x$ are in $C_0$. Subdiagonal points move away from $(1,0)^\intercal$, those above the diagonal away from $(0,1)^\intercal$.   Right and below: refinement produced by the algorithm applied to example \eqref{eq:switchxy}. }
\end{figure}
The mapping 
\begin{align}\label{eq:switchxy}
\vek F:\quad\realnr^2\supset \simp:=\text{conv}(\vek e_1,\vek e_2) &\longrightarrow  \simp\\
(x,y)^\intercal & \mapsto  (y,x)^\intercal
\end{align}
has  $C_0=\simp$, $C_1$ is the area above the diagonal, and $C_2$ is the area below the diagonal, thus the fixed-points are the common set of them, the diagonal $\{(x,y)^\intercal:\, y=x\}$. It is harder to see that the convergence result is met, but by the refinements around the diagonal it is obvious that the algorithm works efficiently (Fig. \ref{fig:switchxy}). 

\section{Exploding number of Sperner Simplices}

The convergence result Th. \ref{th:convergence} assumes that a Sperner simplex upon bisection produces one new Sperner simplex, either a child of the Sperner simplex or child of a neigbours a neighbour that was divided because it shared the divided edge. The following example illustrates that more than one Sperner simplex emerges during division once nodes are in more than one $C_i$, which happens  if the $C_i$ are not disjoint. This lowers the efficiency of the algorithm.\\

\subsection{Two Examples for a rising number of Sperner Simplices}

\begin{figure}[h!tb]
\includegraphics[width=0.5\textwidth]{\pstricks/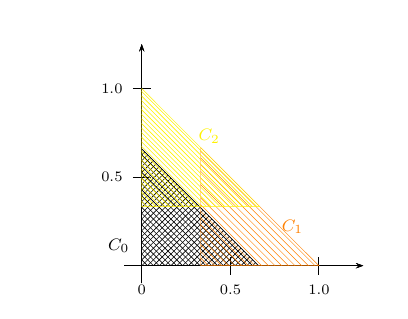}
\includegraphics[width=0.5\textwidth]{\tooManySperners/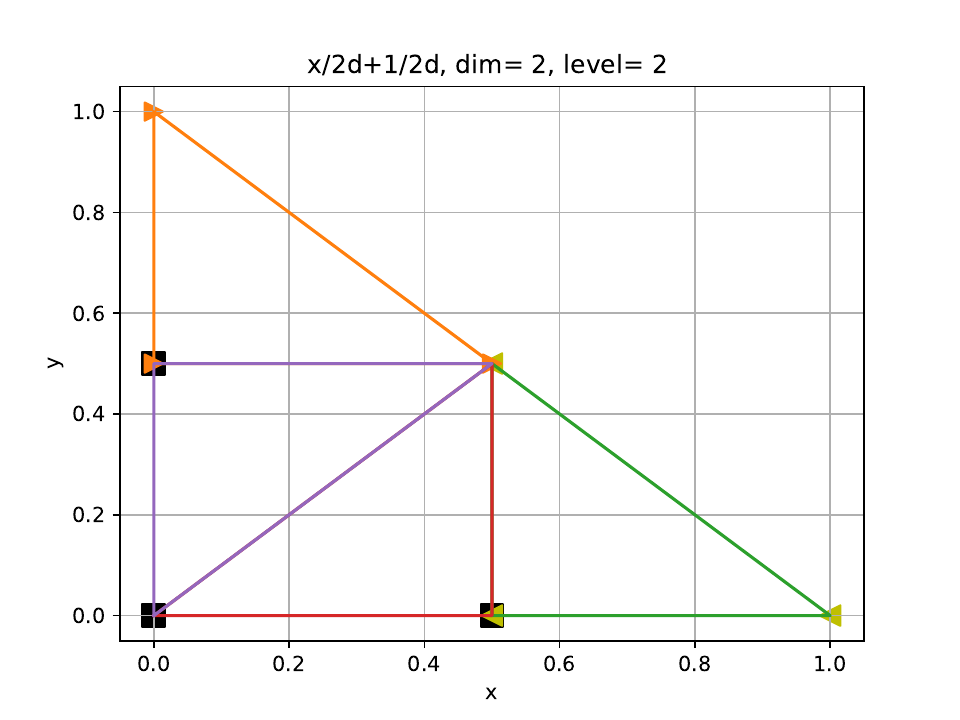}\\
\includegraphics[width=0.5\textwidth]{\tooManySperners/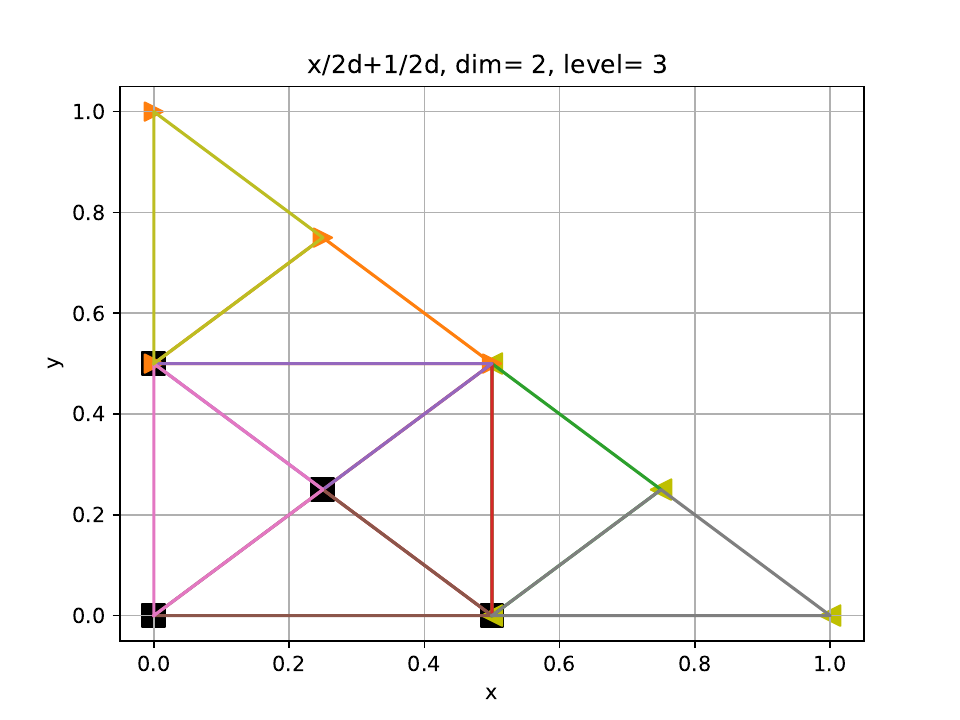}
\includegraphics[width=0.5\textwidth]{\tooManySperners/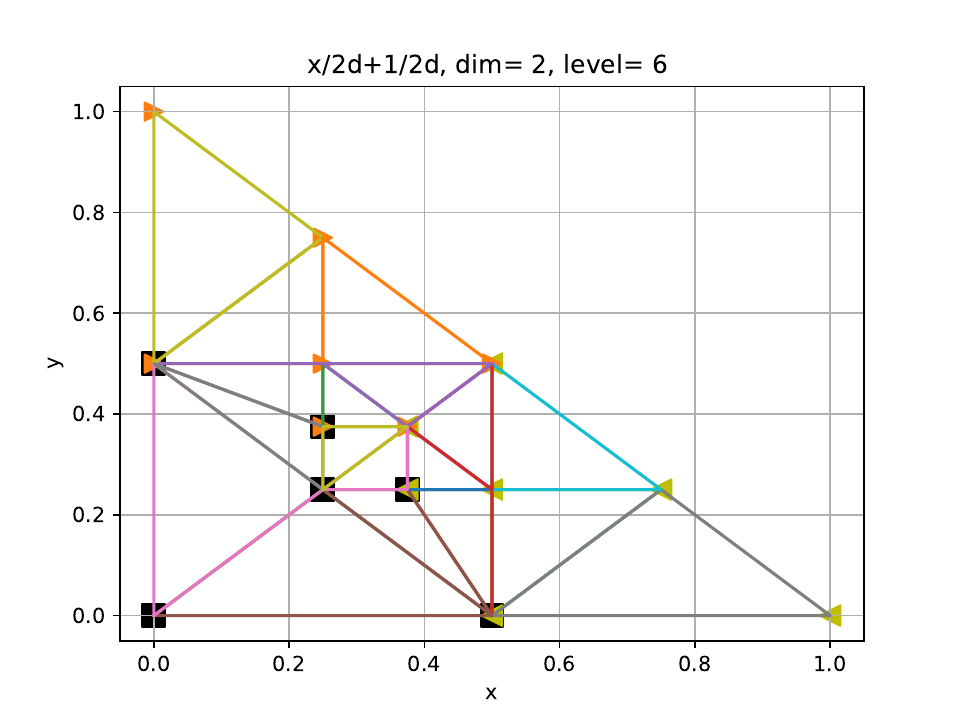}
\caption{\label{fig:x/4+1/4}Above left: The $C_i$ of example in Eq. \eqref{eq:x/4+1/4}: $(x,y)^\intercal  \mapsto \frac{1}{4}\vek x+\frac{1}{4}\vek 1$.   Above right: Step 2 of the algorithm applied to working example \eqref{eq:x/2}.  below left: Step 3. The triangles with $x\le \frac12$ and $y\ge\frac12$ should not have been refined, and it would be sufficient to work only on one side of the diagonal. Below right: Refinement to step  6. The plot shows further unnecessary  refinements. }
\end{figure}

\begin{figure}[h!tb]
\includegraphics[width=0.5\textwidth]{\tooManySperners/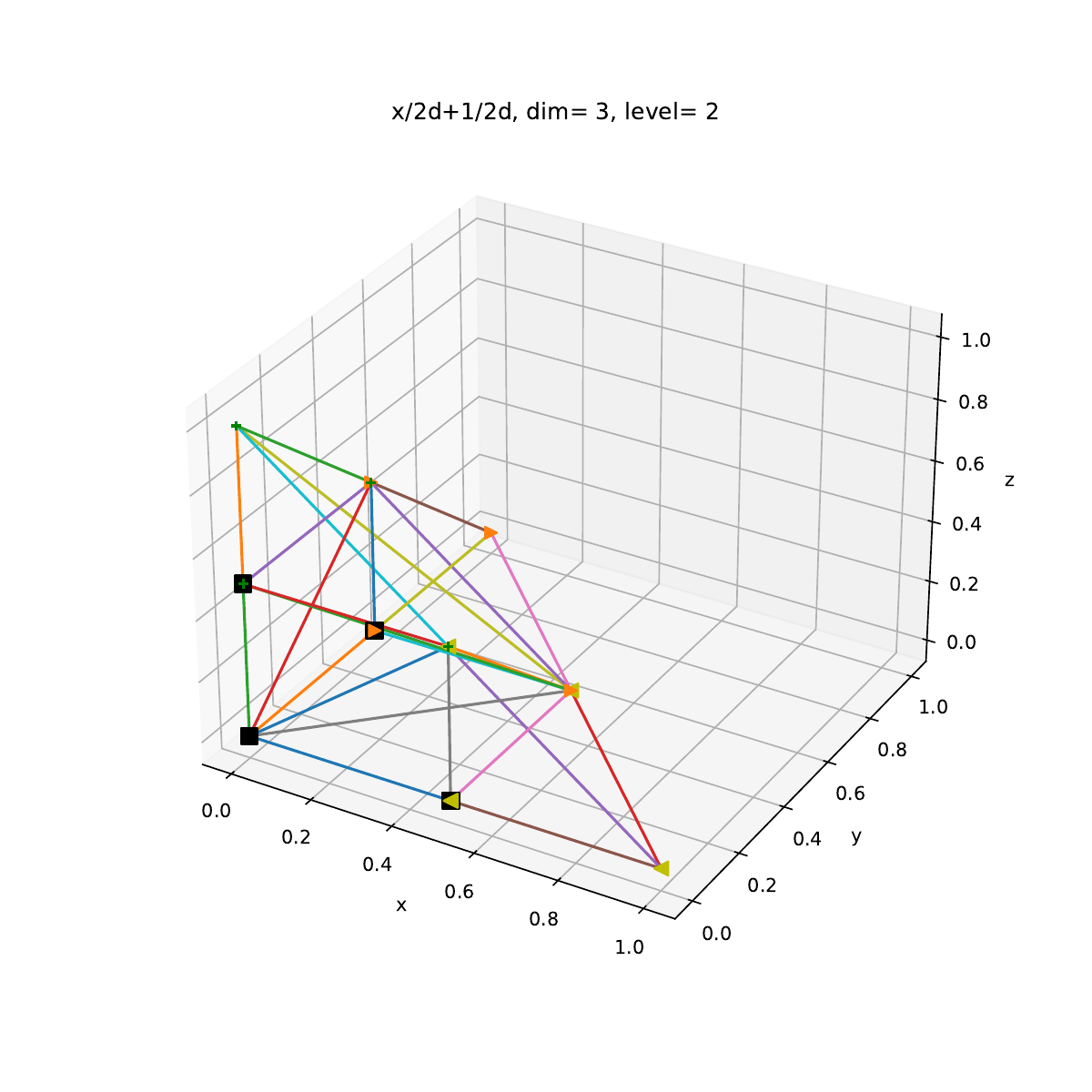}
\includegraphics[width=0.5\textwidth]{\tooManySperners/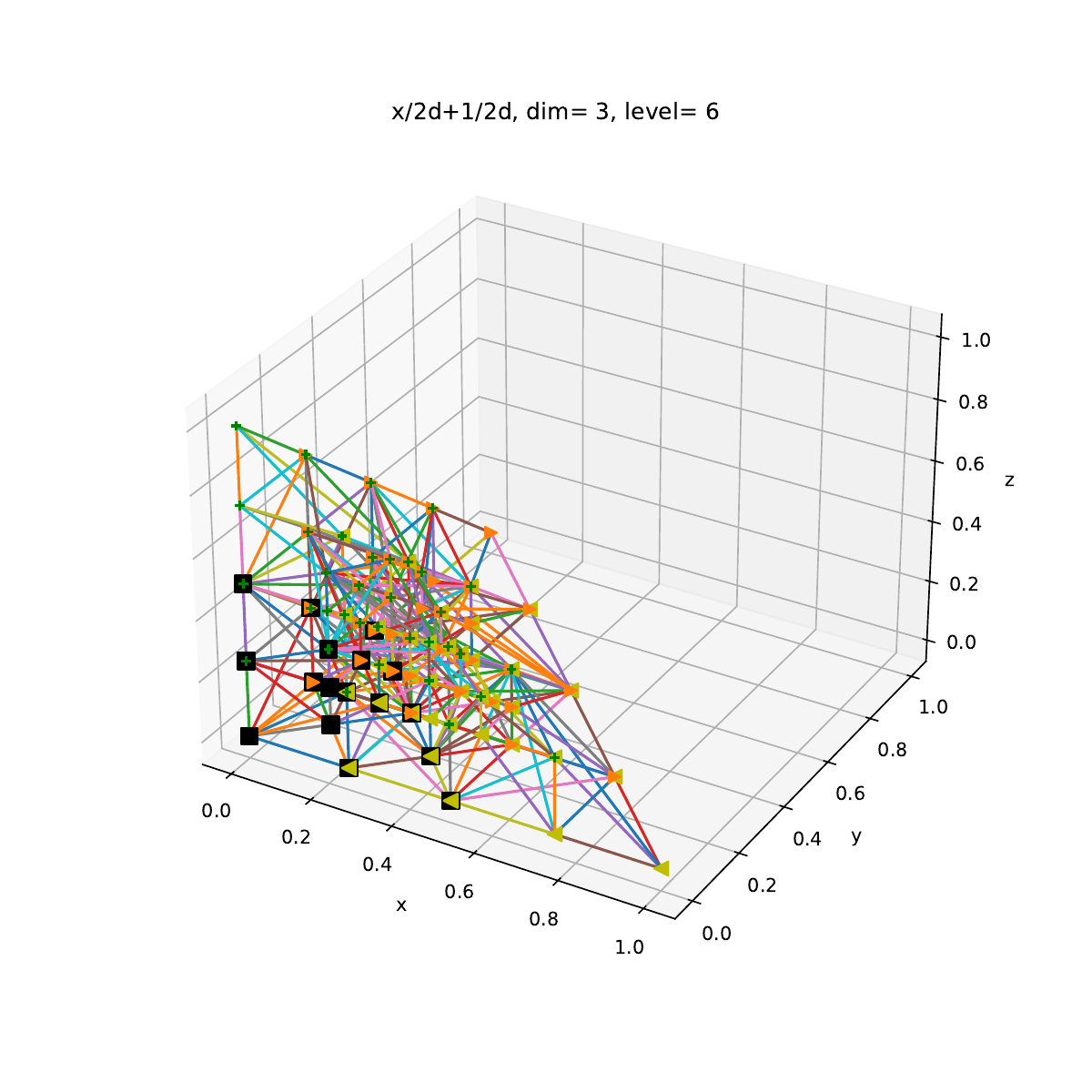}
\caption{\label{fig:x/6+1/6}Left: Problem in Eq.\eqref{eq:x/4+1/4} for $d=3$.   Right: refinement around the fixed point is visible, but similarly to the $2d$ case, the problem  exhibits inefficient divisions of simplices.}
\end{figure}

\subsubsection{Example with one fixed point}
\begin{table}[h!tb]
\begin{minipage}{0.4\textwidth}
\begin{tabular}{|c c c|}
\hline Nr. & x & y \\ \hline
11&0.50000&0.25000\\
12&0.25000&0.50000\\
...&0.37500&0.37500\\
&0.37500&0.25000\\
&0.25000&0.37500\\
&0.43750&0.31250\\
&0.31250&0.43750\\
&0.31250&0.31250\\
&0.37500&0.31250\\
&0.31250&0.37500\\
&0.34375&0.34375\\
 \hline
\end{tabular}\\
\end{minipage}
\begin{minipage}{0.5\textwidth}

\begin{tabular}{|c c c c| c|}
\hline Nr& x & y & z & e \\ \hline
95&	0.18750&0.25000&0.25000 &0.0718\\
...&	0.25000&0.18750&0.12500 &0.0910\\
&	0.18750&0.25000&0.12500 &0.0910\\
&	0.21875&0.21875&0.18750 &0.0293\\
&	0.31250&0.15625&0.21875 &0.1221\\
&	0.15625&0.31250&0.21875 &0.1221\\
&	0.18750&0.21875&0.21875 &0.0293\\
&	0.28125&0.18750&0.15625 &0.0931\\
&	0.34375&0.21875&0.12500 &0.1632\\
&	0.18750&0.28125&0.15625 &0.0931\\
104&	0.21875&0.34375&0.12500 &0.1632 \\ 
\hline
\end{tabular}
\end{minipage}
\caption{\label{tab:ptsx2d+1_2d} The last points found  for problems in Eq. \eqref{eq:x/4+1/4} for 2d, ($\vek u_{FP} = \left(\frac13,\frac13\right)^\intercal$),  (left), and 10 points of the 3d solution path ($\vek u_{FP} = \left(\frac15,\frac15,  \frac15\right)^\intercal$) (right), the error $e:= \norm{\vek x-\vek x_{FP}}$ is given for easier reading. It is the high number of Sperner simplices that makes convergence invisible for the 3d case.}
\end{table}

The mapping
\begin{equation}\label{eq:x/4+1/4}
\vek y(\vek x)= \frac{\vek x}{2d}+\frac{1}{2d}\vek 1
\end{equation}
has one fixed point at $\vek x_F = \frac{1}{2d-1}\vek 1$, so for $d=2:$ $\vek x_F = \left(\frac13,\frac13\right)^\intercal$ and for $d=3$ $\vek u_{FP} = \left(\frac15,\frac15,  \frac15\right)^\intercal$.
Let $d=2$. For this mapping, $C_1=\{\vek x: x_1> \frac13  \}$, $C_2=\{\vek x: x_2> \frac13   \}$ and $C_0=\{\vek x: x_2\le -x_1+\frac23 \}$\footnote{Note that here $\lambda_1=x_1$ and $\lambda_2=x_2$. Then 
\begin{align*}\lambda_0=1-\lambda_1-\lambda_2=1-x_1-x_2\ge &1 - (\frac14 x_1+\frac14)-(\frac14 x_2+\frac14)\\
\Leftrightarrow -3x_1-3x_2 \ge 2\\
\Leftrightarrow  x_2\le -x_1+\frac23.
\end{align*} } (Fig. \ref{fig:x/4+1/4}, top left).
Thus the diagonal $x_1=x_2$ is in $C_1$ and $C_2$, the node $(\frac12, 0)$ is in $C_0$ and $C_2$ and the node $(0,\frac12)$ is in $C_0$ and $C_1$. Those nodes produce Sperner simplices, which are marked for refinement and produce computational effort (Fig. \ref{fig:x/4+1/4}, top right and bottom left).\\
 As the number of simplices that a node is a corner of grows with the dimension,  this effect becomes stronger with growing dimension, as the example   \eqref{eq:x/4+1/4} with $d=3$ shows (Fig. \ref{fig:x/6+1/6}). The convergence becomes barely visible due to the high number of emerging sperner simplices (see Table \ref{tab:ptsx2d+1_2d}).

\section{Overcoming Sperner Simplex Number Explosion by unifying $C_i$ Membership}

\subsection{Redefining the $C_i$}
There are ways to reduce the number of $C_i$ memberships of nodes such that the prerequisite of the 
KKM lemma \ref{th:kkm}
\begin{equation*}
\vek u_i \in \text{conv}(\vek v_{i_1},...\vek v_{i_n})\,\Rightarrow \,i\in \{i_1,...,i_n\} \end{equation*}
is still met. To show that the emerging new $C_i$ may be such that fixed points are lost, a brute force algorithm for discarding $C_i$ memberships is presented in the appendix
\ref{sec:reduceC_iBruteForce}.

The following algorithm  does not conceal  fixed points. Consider the following way to define $C_i$: 
 \begin{equation}\label{eq:C_iAsMax}
\vek u \in C_i \quad \Leftrightarrow \quad i= \argmax_j \lambda_j(\vek u) - \lambda_j(\vek F(\vek u)).
\end{equation}
As $\argmax$ gives more than one number if the maximum is attained for more than one $j$, the $C_i$ are again not disjoint, but the intersections of the $C_i$ should in most situations be the boundaries of the $C_i$ and of measure 0. This reduces the number of emerging Sperner simpices vastly. It remains to investigate in which situations intersections are not of measure 0. This is left to future work. 
We 
\begin{enumerate}
\item 
state that in contrast to Sec. \ref{sec:reduceC_iBruteForce},  all  fixed points remain in the intersection of the $C_i$: $\vek u_{FP}\in C_i$, $i=0,...,d$, as for fixed points $\lambda_j(\vek u) - \lambda_j(\vek F(\vek u))=0$ for all $j$, and thus potentially can be found by the suggested algorithm. Thus no fixed point is lost to the algorithm.
\item show that the emerging indexing of nodes by the index of the $C_i$ membership still has the property \eqref{eq:indexedAsFace} from the Sperner lemma, 
\begin{equation*}
\vek u_i \in \text{conv}(\vek v_{i_1},...\vek v_{i_n})\,\Rightarrow \,i\in \{i_1,...,i_n\} \end{equation*}
\end{enumerate}
Let $\vek u \in \text{conv}(\vek v_{i_1},...\vek v_{i_n})$. Let $\vek u$ be a fixed point. Then   $\vek u \in C_i$ for all $i$, including $i_1, ..., i_n$.\\
Now let $\vek u\in \text{conv}(\vek v_{i_1},...\vek v_{i_n})$ be no fixed point. Assume that $\vek u \notin C_i$ for all $i=i_1,...,i_n$.
Then 
 \begin{equation}
\max_{j=1,...d}( \lambda_j(\vek u) - \lambda_j(\vek F(\vek u)))=\max_{j\notin\{i_1,...i_n\}}( \lambda_j(\vek u) - \lambda_j(\vek F(\vek u))).
\end{equation}
Furthermore, as $\vek x$ is not a fixed point, there is a $j$ such that $\lambda_j(\vek u) > \lambda_j(\vek F(\vek u))$, and thus 
 \begin{equation}
\max_{j}( \lambda_j(\vek u) - \lambda_j(\vek F(\vek u)))>0.
\end{equation}
Altogether
 \begin{equation}
 \begin{split}
0 < \max_{j=1,...d}( \lambda_j(\vek u) - \lambda_j(\vek F(\vek u)))=\max_{j\notin\{i_1,...i_n\}}( \lambda_j(\vek u) - \lambda_j(\vek F(\vek u)))\\
=\max_{j\notin\{i_1,...i_n\}}( 0 - \lambda_j(\vek F(\vek u)))\le 0,
\end{split}
\end{equation}
which is a contradiction. Thus  $\vek u \in C_i$ for an $i\in\{i_1,...i_n\}$.\\
As said, the definition of the $C_i$ is also leaving all fixed points in the intersection of the $C_i$: $\vek x_{FP}\in C_i$, $i=0,...,d$.

\begin{figure}[h!tb]
\includegraphics[width=0.5\textwidth]{\pstricks/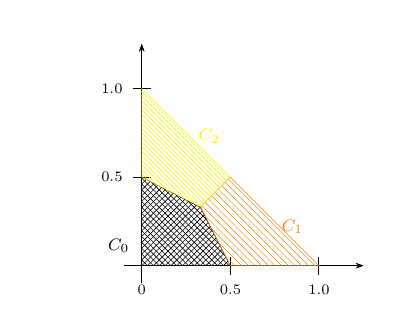}
\includegraphics[width=0.5\textwidth]{\maxCi/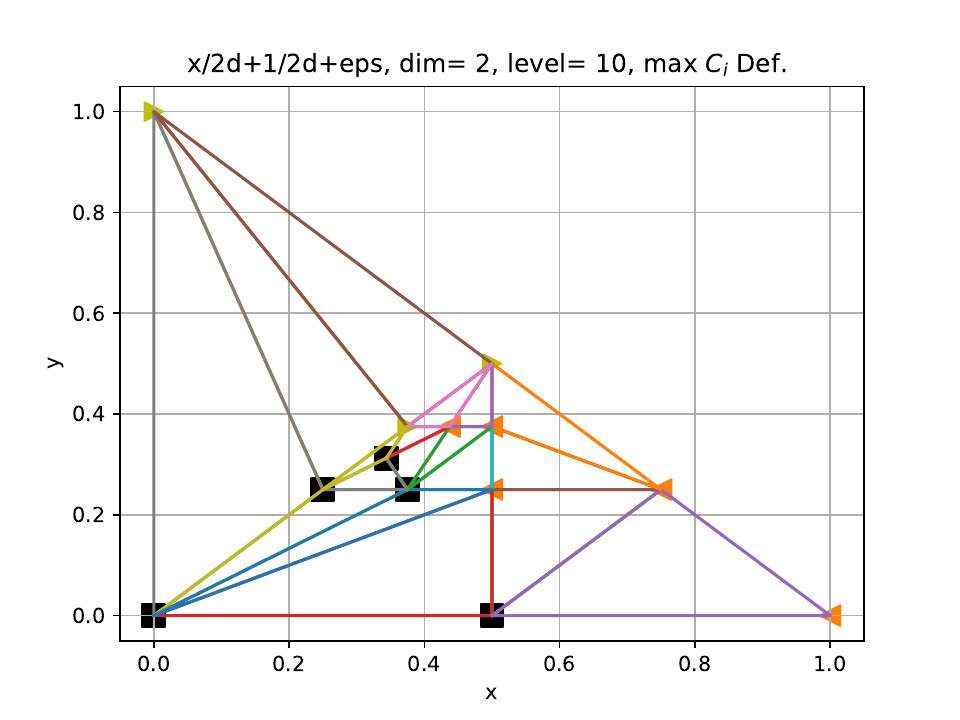}
\caption{\label{fig:x/4+1/4NewCi} Left: The $C_i$ of example in Eq. \eqref{eq:x/4+1/4+eps}: $(x,y)^\intercal  \mapsto  \frac{1}{4}\vek x+\frac{1}{4}\vek 1+\vek \epsilon$, using $C_i$ definition \eqref{eq:C_iAsMax}.   Right: Refinement pattern for that example. See Fig. \ref{fig:x/4+1/4} for comparison.}
\end{figure}

\begin{figure}[h!tb]
\begin{center}
\includegraphics[width=0.6\textwidth]{\maxCi/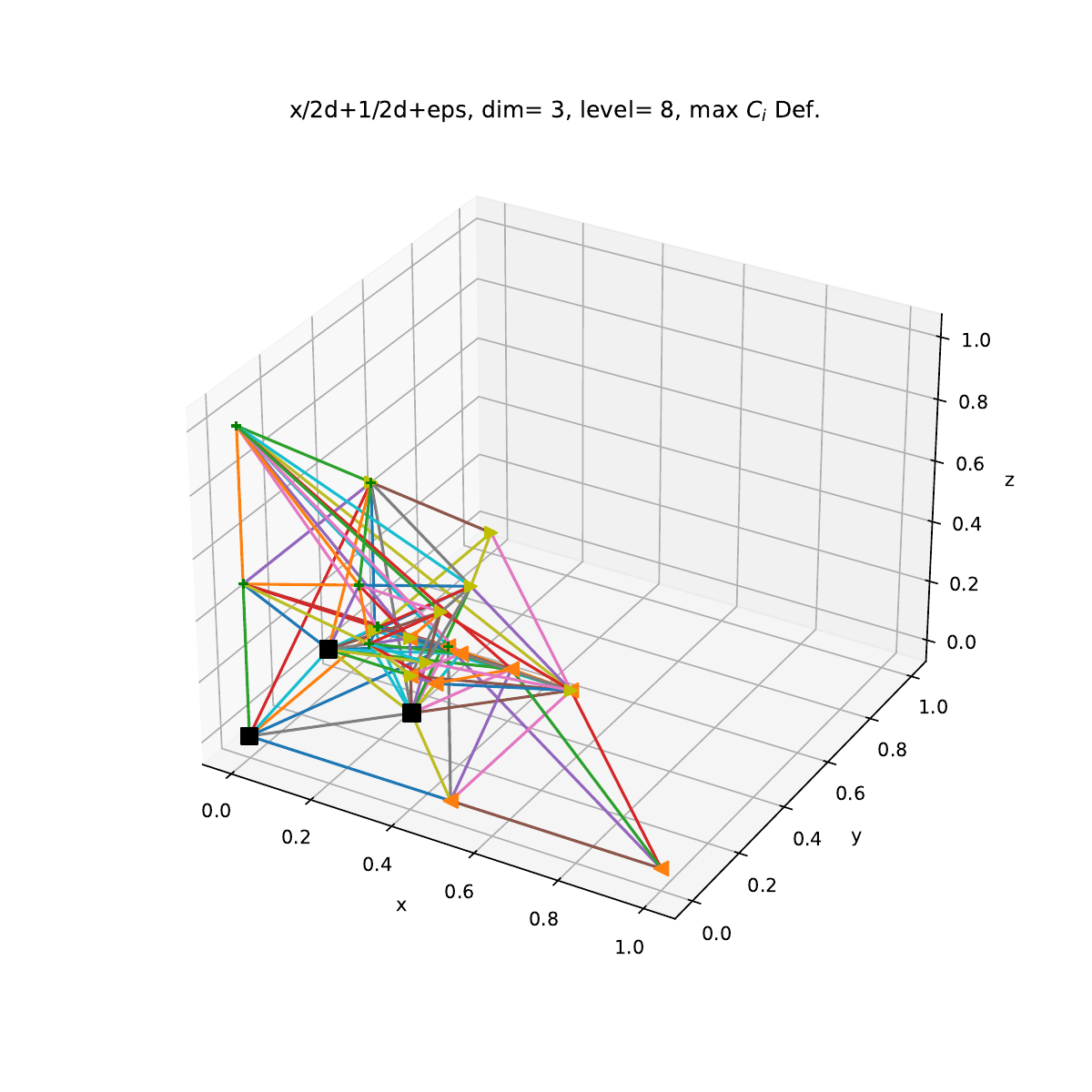}
\end{center}
\caption{\label{fig:x/4+1/4NewCi_3d}  Refinement pattern for example \eqref{eq:x/4+1/4+eps}: $(x,y,z)^\intercal  \mapsto  \frac{1}{6}\vek x+\frac{1}{6}\vek 1+\vek \epsilon$, using $C_i$ definition \eqref{eq:C_iAsMax}. Points concentrate early around the Fixed Point. See Fig. \ref{fig:x/6+1/6} for comparison.    }
\end{figure}

\subsection{Examples}
\subsubsection{Example with one fixed point}

Consider the problem in \eqref{eq:x/4+1/4} again. In Fig. \ref{fig:x/4+1/4NewCi}, the $C_i$ as defined by \eqref{eq:C_iAsMax} are shown: Consider the boundary between $C_2$ and $C_0$. There $\lambda_2(\vek x)-\lambda_2(F(\vek x))=\lambda_0(\vek x)-\lambda_0(F(\vek x))$ holds. A basic calculation (see Appendix Sec. \ref{sec:CiBound})
 yields $x_2=-\frac12x_1+\frac12$. The boundary between $C_1$ and $C_0$ is calculated analogously, and the one between $C_1$ and $C_2$ must be on the diagonal for symmetry reasons.  For comparison with the $C_i$ definition \eqref{eq:C_i}, see  figure \ref{fig:x/4+1/4}.\\
 
 \begin{table}[h!tb]
\begin{minipage}{0.525\textwidth}
\footnotesize
\begin{tabular}{|c c c c| c|}
\hline Nr& x & y & z & e \\ \hline
95&	0.18750&0.25000&0.25000 &0.0718\\
...&	0.25000&0.18750&0.12500 &0.0910\\
&	0.18750&0.25000&0.12500 &0.0910\\
&	0.21875&0.21875&0.18750 &0.0293\\
&	0.31250&0.15625&0.21875 &0.1221\\
&	0.15625&0.31250&0.21875 &0.1221\\
&	0.18750&0.21875&0.21875 &0.0293\\
&	0.28125&0.18750&0.15625 &0.0931\\
&	0.34375&0.21875&0.12500 &0.1632\\
&	0.18750&0.28125&0.15625 &0.0931\\
104&	0.21875&0.34375&0.12500 &0.1632 \\ 
\hline
\end{tabular}
\end{minipage}
\begin{minipage}{0.525\textwidth}
\footnotesize
\begin{tabular}{|c c c c| c|}
\hline Nr & x & y & z & e\\ \hline
16& 0.12500&0.12500&0.25000  & 0.1172	 \\
17 &0.25000&0.12500&0.37500  & 0.1968	 \\
18 &0.12500&0.25000&0.37500  & 0.1968	 \\
...&0.37500&0.25000&0.25000  & 0.1887	 \\
&0.25000&0.37500&0.25000     & 0.1887	 \\
&0.18750&0.18750&0.25000     & 0.0530	 \\
&0.25000&0.25000&0.12500     & 0.1030	 \\
&0.31250&0.25000&0.12500     & 0.1441	 \\
&0.25000&0.31250&0.12500     & 0.1441	 \\
25&0.21875&0.21875&0.12500   & 0.0795	 \\ 
26&0.21875&0.21875&0.18750   & 0.0293\\ \hline
\end{tabular}
\end{minipage}
\caption{\label{tab:ptsx2d+1_2d+eps} Points found  for problem  \eqref{eq:x/4+1/4+eps} 
in 3d ($\vek u_{FP} \sim \left(\frac15,\frac15,  \frac15\right)^\intercal$) Definition of $C_i$ according to \eqref{eq:C_i} (left) and \eqref{eq:C_iAsMax} (right), 9 steps of algorithm, $e:= \norm{\vek x-\vek x_{FP}}$. The last 11 points from 104 (left) and 26 (right) points produced are shown. The points found using definition of $C_i$ according to \eqref{eq:C_i} (left) are identical to those for the undisturbed Problem \eqref{eq:x/4+1/4}  in Tab. \ref{tab:ptsx2d+1_2d} due to overlapping $C_i$ but are presented here again for comparison.   }
\end{table}

\begin{figure}[h!tb]
\includegraphics[width=0.5\textwidth]{\pstricks/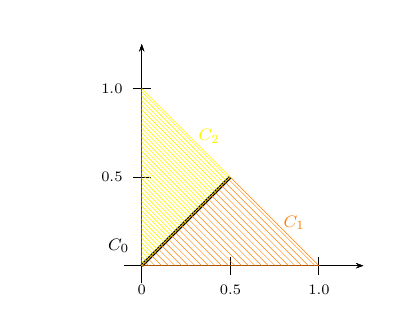}
\includegraphics[width=0.5\textwidth]{\maxCi/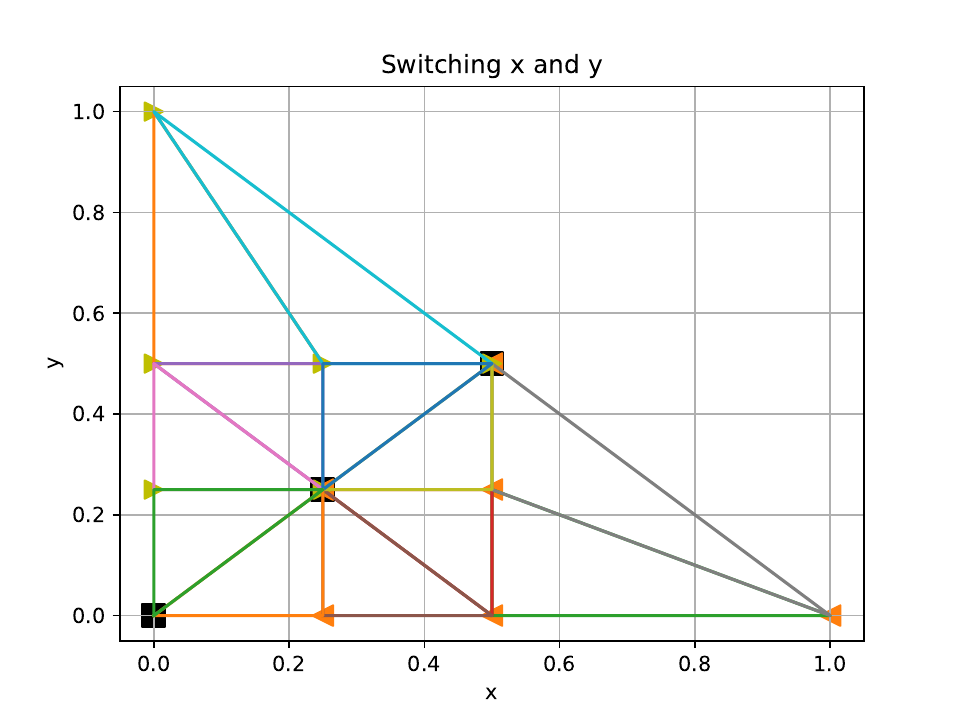}
\includegraphics[width=0.5\textwidth]{\maxCi/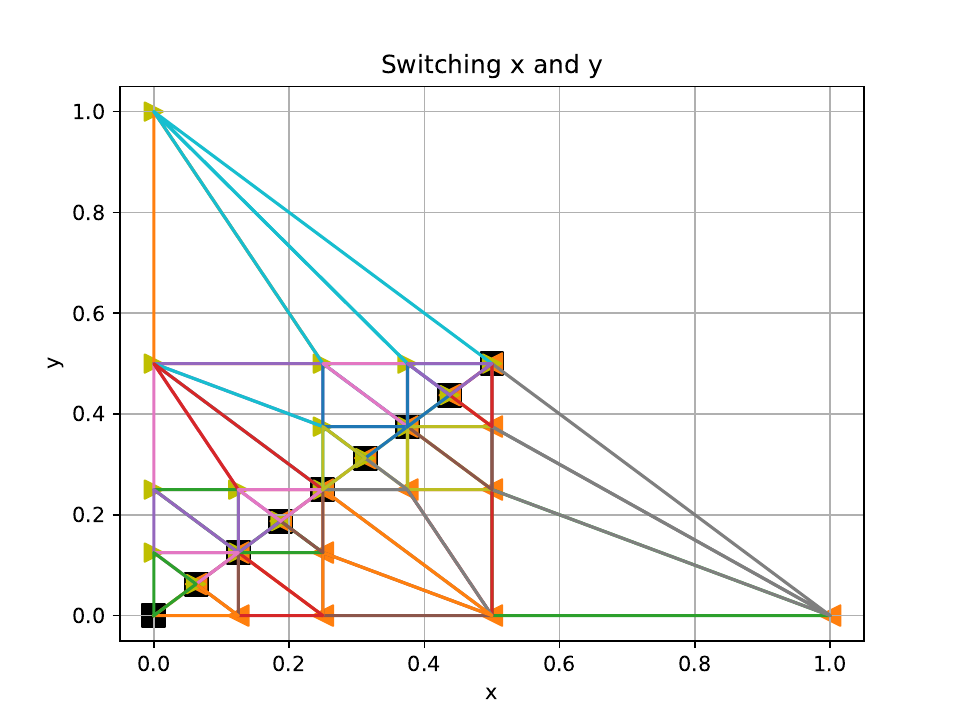}
\includegraphics[width=0.5\textwidth]{\maxCi/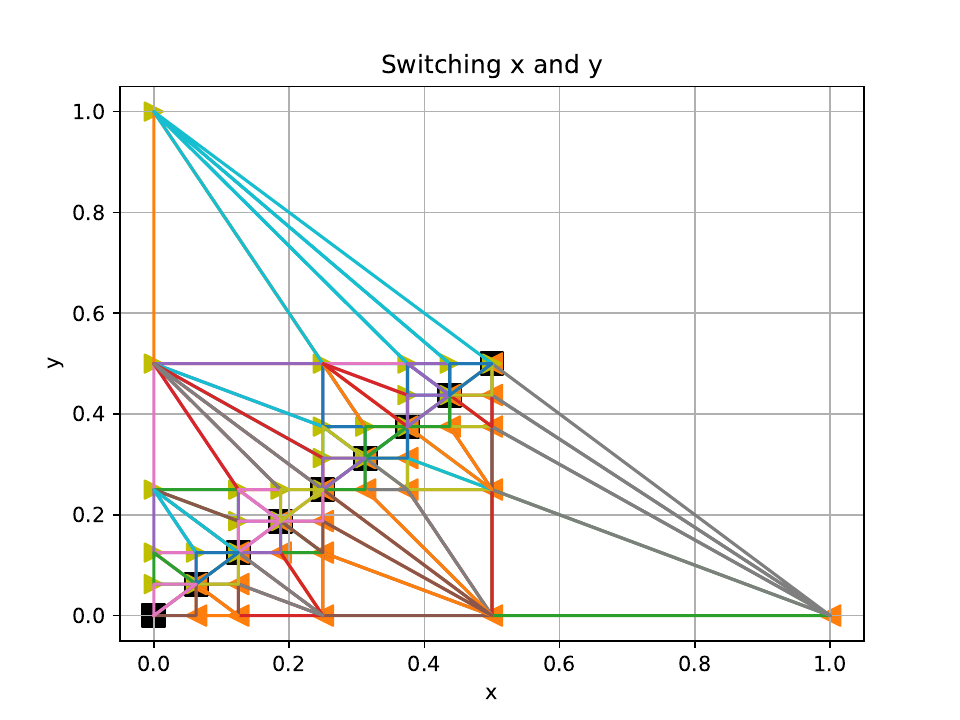}
\caption{\label{fig:switchXYMaxCi}Above left: The $C_i$ of example \eqref{eq:switchxy}: $(x,y)^\intercal  \mapsto  (y,x)^\intercal$ (switching $x$ and $y$) using $C_i$ definition \eqref{eq:C_iAsMax}.  $C_0$ is the diagonal only. $C_1$ and $C_2$ remain as in Fig. \ref{fig:switchXYMaxCi}.   Right and below: refinements using $C_i$ definition \eqref{eq:C_iAsMax}. See Fig. \ref{fig:switchxy} for comparison.}
\end{figure}

 Now problem \eqref{eq:x/4+1/4} is a simple test problem, and for its simplicity for example the diagonal is in $C_1$ and $C_2$, so all nodes on it are in more than one $C_i$ for both ways of defining $C_i$. Moreover,  $(0,0.5)^\intercal$ and $(0.5,0)^\intercal$ are in $C_0$ and $C_1$ respectively $C_2$ when using Def. \eqref{eq:C_iAsMax} as well. Thus typical points of a refinement are still in more than one $C_i$. This hides the advantage of Def. \eqref{eq:C_iAsMax}. We thus slightly modify the problem by disturbing it:
 \begin{equation}\label{eq:x/4+1/4+eps}
\vek y(\vek x)= \frac{\vek x}{2d}+\frac{1}{2d}\vek 1 +\vek \epsilon.
\end{equation}
The $C_i$ memberships do not change in a visible way, they look like in Fig. \ref{fig:x/4+1/4}. 
As the intersections of the $C_i$ according to the old defininition Eq. \eqref{eq:C_i} are quite wide and thus is essentially unaffected by the disturbance, the algorithm using the old $C_i$  yields an identical refinement as in Fig. \ref{fig:x/4+1/4}, so is neither presented again. Application of the new $C_i$ membership criterion \eqref{eq:C_iAsMax} yields a refinement pattern that is much more sparse (Figs. \label{fig:x/4+1/4NewCi_3d} \ref{fig:x/4+1/4NewCi}) and approaches the fixed point using less points (Tab. \ref{tab:ptsx2d+1_2d+eps}).

\subsubsection{The switching example}

As a further example,  the mapping that switches $x$ and $y$  $(x,y)^\intercal  \mapsto  (y,x)^\intercal$ \eqref{eq:switchxy} using $C_i$ definition \eqref{eq:C_iAsMax} is plotted in Fig. \ref{fig:switchXYMaxCi}. As on the diagonal $\lambda_0(\vek x)=\lambda_0(\vek f(\vek x))$ and thus $\lambda_0(\vek x)-\lambda_0(\vek f(\vek x))=0$, $C_0$ is the diagonal only. $C_1$ and $C_2$ remain as for the old $C_i$ definition. The refinement now is restricted to simplices that actually contain a fixed point.

The new refinement pattern is much more sparse and approaches the fixed point using less points.\\
However, there are situations in which a lot of Sperner Simplices will emerge. The characterization of such mappings should be considered in future work. 

\subsubsection{Fourdimensional example with one fixed point}
Example Eq. \eqref{eq:x/4+1/4+eps} is run for $d=4$. The results are presented in Table \ref{tab:ptsx2d+1_2d_4d}.
\begin{table}
\begin{minipage}{0.5\textwidth}

\begin{tabular}{|c c c c c| c |}
\hline Nr & $x_1$ & $x_2$ & $x_3$ & $x_4$  &$\norm{\vek x-\vek x_{FP}}$\\ \hline
80&	0.140625& 0.203125& 0.09375 & 0.15625 & 0.07891 \\
..&	0.125   & 0.171875& 0.109375& 0.125   & 0.05099 \\
&	0.125   & 0.15625 & 0.15625 & 0.15625 & 0.02927 \\
&	0.140625& 0.125   & 0.140625& 0.125   & 0.02545 \\
&	0.125   & 0.140625& 0.140625& 0.125   & 0.02545\\ 
&	0.15625 & 0.15625  &0.09375  &0.15625 & 0.05431 \\
&	0.140625& 0.125    &0.171875 &0.15625 & 0.03667 \\
&	0.125	& 0.140625 &0.171875 &0.15625 & 0.03667 \\
&	0.203125& 0.140625 &0.15625  &0.15625 & 0.06321 \\
&	0.140625& 0.203125 &0.15625  &0.15625 & 0.06321 \\
...&	0.140625& 0.15625  &0.140625 &0.15625 & 0.01920 \\
92&	0.15625 & 0.140625 &0.140625 &0.125   & 0.02254 \\
93&	0.140625& 0.15625  &0.140625 &0.125   & 0.02254 \\
94&	0.140625& 0.140625 &0.15625  &0.15625 & 0.01920 \\
95&	0.15625 & 0.171875 &0.109375 &0.125   & 0.04961 \\
 \hline
\end{tabular}
\end{minipage}
\caption{\label{tab:ptsx2d+1_2d_4d} Points found  for problem  \eqref{eq:x/4+1/4+eps}  
in 4d ($\vek u_{FP} = \left(\frac17,\frac17,  \frac17,   \frac17\right)^\intercal)$, $\frac17\sim0.142857$. Definition of $C_i$ according to \eqref{eq:C_i} (left) and \eqref{eq:C_iAsMax} (right), 9 steps of algorithm. The last 10 points from 95   points produced are shown. }
\end{table} 

Keeping in mind that it takes four steps to half all edge lengths of the simplex containing the fixed point and that there is probably more than one Sperner Simplex, the found points exhibit convergence.

\section{Brouwer's  Fixed Point Theorem on convex, compact sets: Mapping to Simplices }
The algorithm \ref{algo:knaster} applies to mappings $\mathcal S \longrightarrow \mathcal S$ only, whereas
Brouwer's Fixed Point Theorem applies to general compact hole-free sets $\Omega$, proven by arguing that there is always a bijective mapping $A: \Omega \longrightarrow \mathcal S $.
\begin{figure}[h!tb]
\begin{center}
\includegraphics[width=0.5\textwidth]{\pstricks/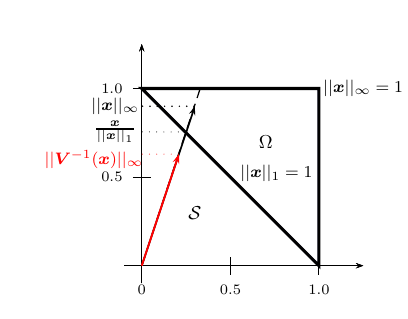}
\end{center}
\caption{\label{fig:fromOmToSimp} The Mapping $\vek V:\simp \longrightarrow \Omega$ }
\end{figure}

 Thus, as $\vek F:\simp \to \simp$ has a fixed point $\vek x$, the mapping
$$ A^{-1}\circ \vek F\circ A: \, \Omega \longrightarrow \Omega $$ 
has the fixed point  $\vek y= A^{-1}\vek x$.\\
The injectivity can be achieved for any zero search problem of a bounded mapping (Section \ref{sec:injectivity}).

For extending the algorithm to more general domains $\Omega$, this argumentation is the wrong way around: The algorithm relies on Th. \ref{thm:brouwerSimp} and consequently requires evaluations at corners of simplices, which one finds in the triangulation, while the function $\tilde F$ whose fixed points one wants to find is from $\Omega$ to $\Omega$.\\
Thus, a mapping $\vek V:\simp \longrightarrow \Omega$ is needed and the algorithm is applied on the mapping
\begin{align}
 \vek V^{-1}\circ\vek F \circ\vek V: \simp \longrightarrow & \simp\\
 \vek x \mapsto & \vek V^{-1}(\vek F(\vek V\vek x)).
\end{align}
The mapping $\vek V:\simp \longrightarrow \Omega$ is readily found for square domains, without loss of generality $\Omega = [0,1]^d$: Then, for $x_j$ being the maximum component of $\vek x$,
the relation 
$$\frac{x_j}{(\vek V^{-1}(\vek x))_j}=\frac{1}{\left(\frac{\vek x}{\norm{\vek x}_1}\right)_j}$$ 
holds (see Fig. \ref{fig:fromOmToSimp}),
yielding
\begin{equation}
(\vek V^{-1}(\vek x))_j =\left(\frac{x_j\vek x}{\norm{\vek x}_1}\right)_j
\end{equation}
or 
\begin{equation}
\vek V^{-1}(\vek x) =\frac{\norm{\vek x}_\infty}{\norm{\vek x}_1}\vek x.
\end{equation}

\section{Discussion}
The suggested  algorithm can not and is not meant to compete with Newton-type solvers  in settings where one looks for one specific zero which one has an idea of or even has a good starting solution.\\
The suggested algorithm does not necessarily find all solutions, but by applying additional refinements more  solutions can be found in a much more systematic and efficient way than by placing a grid of initial points for Newton-type solvers.

\subsection{Usage for optimization}
The algorithm has the property
 that the scaling by the maximum values of the function for achieving the injectivity of the mapping that is presented in Appendix \ref{sec:injectivity} can be repeated once new maximum components are found and the $C_i$ memberships can be recalculated, which is not computationally costly. In the worst case, simplices may have been misjudged as Sperner and been refined in vain.
It thus seems feasible to use the suggested algorithm for search of zeros of the gradient during optimisation. Difference quotients can be calculated from the objective function evaluated at nodes. Initially coarse approximations of the gradient, the difference quotients improve with the refinements, and  as suggested above, the $C_i$ memberships can be rejudged using a  posteriori rescaling at a low computational cost. A transformation to injectivity according to Sec. \ref{sec:injectivity} can be rescaled during the calculation as well.\\

\subsection{Explorative nature of the algorithm}

The strength of the algorithm is that it explores the function globally during the solution process, while Newton-type solvers deliver function evaluations only very close to the solution path, which means that their exploration is essentially one-dimensional. The number of function evaluations for the suggested algorithm is nominally higher than for Newton-type solvers, but the evaluated points are far more useful. The exploration of the suggested algorithm, though global, becomes finer near the points of interest. So the solving process produces not only a solution, but sensibly distributed function evaluations, useful for further usage in e.g. surrogate models such as neural networks. This can be useful in settings where evaluations are costly, e.g. if acquired through experiments. The algorithm then is a valuable Design-of-Experiment tool, as it suggests which evaluation point should be next for maximum information production.\\

\subsection{Possible enhancements}\label{sec:enhancements}
By choosing the bisection points on edges closer to $C_i$ boundaries, new points have closer distance to the fixed point than those placed  in the middle of edges. On the other hand, 
using for example \emph{regula falsi}, in case of convexity the new points might approach the fixed point from one side, so the content interval the fixed point is in might not go to zero and one would then interpret the results of the algorithm as a sequence of points rather than a sequence of simplices. Furthermore, this comes at the cost of exploration - points are much more concentrated in that case.

Another promising variation is not bisecting Sperner simplices, but placing a new point in the baricentrum of the simplex or the barycentrum of its image or a combination of both, so a simplex is divided into $d$ new simplices. Alternatively, the new point could be placed by solving the system $\vek\lambda(\vek x)- \vek\lambda(\tilde{\vek F}(\vek x))=\vek 0$, where $\tilde{\vek F}$ denotes the linear approximation of $\vek F$ through the corners of the Sperner simplex. \\
  A problem could be that older points of the refined simplex become corners of many simplexes, and if they are in more than one $C_i$, this would increase  the  number of produced Sperner Simplices. This will be investigated in future work. 

\subsection{Usage in combination with other solvers}
Once it becomes clear that a fixed point is close to a Sperner simplex, possibly by usage of smoothness information, the search can be continued by a Newton-Type solver. In fact, solving $\vek\lambda(\vek x)- \vek\lambda(\tilde{\vek F}(\vek x))=\vek 0$ for determining the next node as suggested in Sec. \ref{sec:enhancements} bears strong similarities with a Newton step as it solves a linear approximation of the problem. If this proves to work, a classical Newton solver seems to be obsolete.

\subsection{Exploitation of Mapping Degree Theory}

The suggested algorithm makes use of   the Mapping Degree Theory in the discrete form used to prove Fixed Point theorems as presented in Appendix \ref{sec:brouwerFP}.
\section{Implementation}
An implementation of  algorithm \ref{algo:knaster} is given in  the supplementary material or alternatively available from the author.
\section{Acknowledgements}
The author wants to thank Luisa Aust for her listening, reasoning and comments, but also for contributing imagery.

\appendix  
\appendixpage

\section{Transformations of Zero-Search Problems to injective Fixed-Point Form}
For the zero search problem 
\begin{align}\label{eq:zero}
\text{For }\vek G:\quad\realnr^d\supset \Omega &\longrightarrow  \realnr^d\\
\text{find } \vek x_0: \,\,\vek G(\vek x_0)& =\vek 0,
\end{align}
$\vek G $ bounded, the solution $\vek x_0$ is also the solution of 
\begin{align}\label{eq:injectivity}
\vek x +  \vek G(\vek x) & =\vek x
\end{align}
which has fixed point form, but in general is not injective.\\

\subsection{Transformation to injectivity} \label{sec:injectivity}
Now, a transformation to give \eqref{eq:zero} the injective mapping shape according to Eq. \eqref{eq:injection} is presented. We consider injectivity into $\simp $ only, so let $\vek x \in \simp.$\\
Let  $\vek U(\vek G) $ be a mapping that is positive, bounded and fulfils $\vek U (\vek 0) = \vek 0$, e.g. $\vek U(\vek G) = \vek G^2$ where the square denotes $\vek G^2_i=(\vek G)_i^2$. Then with
$$ c_i =\max_{\vek x \in \simp}\{ (\vek U)_{i}\}, $$ 
\begin{equation}
\begin{pmatrix}
\frac{1}{c_1} & & \\
& \ddots & \\
 & & \frac{1}{c_d}
\end{pmatrix}\vek U(\vek G)\le \vek 1 \quad \text{ and } \quad \vek U(\vek G)=\vek 0 \Leftrightarrow \vek G = \vek 0
\end{equation} 
where inequalities are to be read componentwise. Then 
\begin{equation}
\vek 0 \le - 0.9\begin{pmatrix}
\frac{1}{c_1}  U_1( G_1)& & \\
& \ddots & \\
 & & \frac{1}{c_d}U_d( G_d)
\end{pmatrix}
\vek x
+ \vek x \le \vek x , 
\end{equation} 
which is the injectivity of 
\begin{align}
\vek F :\simp &\longrightarrow \simp \\ 
\vek x &\mapsto \vek F :=- 0.9\begin{pmatrix}
\frac{1}{c_1}  U_1( G_1)& & \\
& \ddots & \\
 & & \frac{1}{c_d}U_d( G_d)
\end{pmatrix}
\vek x + \vek x,
\end{align}
and $\vek F=\vek 0$ if $\vek G=\vek 0$ or $\vek x=\vek 0$. The latter zero is unintended and needs separate treatment.

\section{Proof of Brouwer's Fixed Point theorem}\label{sec:brouwerFP}
The presented proof follows closely the proof given in \cite[Ch. 2]{zeidler95}, which again follows closely the 1929 original.
Consider a simplex  $\simp :=\text{conv}(\vek v_0,...\vek v_d)$ and a triangulation of it into subsimplices $\{\simp_j\}$. Let the corners $\vek u_i$ of all  $\{\simp_j\}$ be indexed in the following way: 
\begin{equation}\label{eq:indexedAsFace}
\vek u_i \in \text{conv}(\vek v_{i_1},...\vek v_{i_n})\,\Rightarrow \,i\in \{i_1,...,i_n\} 
\end{equation}
in words: a corner of $\{\simp_j\}$s number  is that of one of the corners of the edge, face, 3d-face and so on  of $\simp$ that contains that corner. For example, $i\in\{0,1\}$ iff $\vek u_i$ is on   the edge between $\vek v_0$ and $\vek v_1$.
\begin{definition}  \label{th:defSperner}
A simplex $\simp_j$ of the triangulation is called \emph{Sperner} iff its corners are indexed with 1,...,d.
\end{definition} 
\begin{theorem}[Sperner's Lemma] \label{th:sperner}Any triangulation with corners indexed as above contains an impair number of Sperner simplices.
\end{theorem}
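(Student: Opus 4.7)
The plan is to prove the lemma by induction on the dimension $d$, using the classical door-counting argument. The base case $d=1$ is immediate: a triangulation of $\conv{\vek v_0, \vek v_1}$ is a sequence of subintervals whose endpoints are labeled $0$ or $1$, with the two outer endpoints forced to be $0$ and $1$ by the face-compatible indexing rule \eqref{eq:indexedAsFace}. Walking from $\vek v_0$ to $\vek v_1$ along the triangulated interval, the label must flip between $0$ and $1$ an odd number of times, and each flip is a Sperner subinterval.

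For the inductive step, assume the statement holds in dimension $d-1$. Call a $(d-1)$-face of a subsimplex a \emph{door} if the set of labels of its $d$ vertices is exactly $\{0,1,\ldots,d-1\}$. The key combinatorial observation is that every Sperner $d$-simplex contains exactly one door (namely the face opposite to its unique vertex labeled $d$), while a non-Sperner $d$-simplex that contains at least one door must contain exactly two (the remaining vertex must then carry a label in $\{0,\ldots,d-1\}$, producing a second door by deleting the duplicated label). Let $S$ denote the number of Sperner simplices and $N$ the number of non-Sperner simplices containing a door, so the sum over all $d$-simplices of the number of doors they contain equals $S+2N$.

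Next I would count the same pairs by the door: each interior door is shared by exactly two adjacent $d$-simplices, while each boundary door lies in exactly one. Writing $I$ and $B$ for the numbers of interior and boundary doors, this gives $S+2N = 2I + B$, so $S \equiv B \pmod{2}$. By the face-compatibility condition \eqref{eq:indexedAsFace}, a boundary door whose labels are $\{0,\ldots,d-1\}$ cannot lie on any $(d-1)$-face of $\simp$ other than $\conv{\vek v_0,\ldots,\vek v_{d-1}}$: any other boundary face is $\conv{\vek v_{j_0},\ldots,\vek v_{j_{d-1}}}$ with some $j_k \neq k$, and its vertices may only carry labels from $\{j_0,\ldots,j_{d-1}\}$, which cannot be $\{0,\ldots,d-1\}$. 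The restriction of the triangulation and labeling to $\conv{\vek v_0,\ldots,\vek v_{d-1}}$ again satisfies the face-compatibility condition (inherited directly from the original), so the induction hypothesis gives that $B$ is odd, and therefore $S$ is odd.

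The main obstacle is not any single calculation but making the two verifications above airtight: first, that in a non-Sperner $d$-simplex the parity of the number of doors really is $0$ or $2$ (this is a small combinatorial exercise on multisets of $d+1$ labels drawn from $\{0,\ldots,d\}$ that do not cover the full range), and second, that the restricted labeling on $\conv{\vek v_0,\ldots,\vek v_{d-1}}$ inherits the face-compatibility property with respect to its own vertex set $\vek v_0,\ldots,\vek v_{d-1}$, which is immediate but worth writing out once. Once these two points are pinned down, the congruence $S \equiv B \pmod{2}$ closes the induction.
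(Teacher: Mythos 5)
Your proposal is correct and follows essentially the same route as the paper: the inductive door-counting argument, where your ``doors'' are exactly the paper's ``distinguished'' $(d-1)$-subsimplices, and the parity is transferred from the boundary facet $\text{conv}(\vek v_0,\ldots,\vek v_{d-1})$ via the induction hypothesis. Your write-up is in fact more explicit than the paper's (in particular the double-counting identity $S+2N=2I+B$ and the verification that boundary doors can only lie on that one facet), but no new idea is involved.
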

\begin{proof} A $d-1$-dimensional subsimplex is called \emph{distinguished} iff it contains indices 1,2,...,d-1. Thus if any $d$-dimensional subsimplex contains
\begin{itemize}
\item exactly one distinguished $d-1$-subsimplex, then it is Sperner
\item or none or more than one, then it is not Sperner.
\end{itemize}
In 1d, the distinguished subsimplices are the 0-vertices.
One is on the boundary of $\simp$($\vek v_0$ itself), yielding (sooner or later) one Sperner simplex.
Others in the interior of  $\simp$ yield (sooner or later) two Sperner simplices. As there is only one distinguished subsimplex on the boundary, the statement follows.\\
In 2d, the distinguished subsimplices are the edges containing 0- and 1-knots. On  $\text{conv}(\vek v_0, \vek v_1)$, they are contained in one 2-simplex, so
\begin{itemize}
\item yielding one Sperner simplex each 
\item or a pair might produce none
\end{itemize}
so an impair number as the number of distinguished 1-simplices is impair.
In the interior, a distinguished simplex is contained in two simplices, so yielding two or no Sperner simplices.\\
For $d =3,4,...$, the  claim follows by induction by repeating the arguing that
\begin{itemize}
\item on  $\text{conv}(\vek v_0, ..., \vek v_{d-1})$ there is an impair number of $d-1$-Sperner simplices, so an impair number of distinguished subsimplices
\item those distinguished simplices on the $(\vek v_0, ..., \vek v_{d-1})$ boundary face yield an impair number of Sperner simplices
\item and those in the interior yield a pair number of Sperner simplices. 
\end{itemize}

\end{proof}

\begin{theorem}[The Lemma of Knaster, Kuratowski and Mazurkewicz] \label{th:kkm}
\end{theorem}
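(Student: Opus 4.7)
The plan is to deduce the KKM lemma from Sperner's lemma (Theorem \ref{th:sperner}) by a compactness argument. First, I would fix a sequence of triangulations $\{\simp_j^{(n)}\}_{n\in\mathbb{N}}$ of $\simp$ whose maximum subsimplex diameter tends to zero; any standard construction (iterated bisection as in Algorithm \ref{algoII}, or barycentric subdivision) produces such a sequence. For each $n$ and each vertex $\vek u$ of the $n$-th triangulation, I would assign an index $i(\vek u)\in\{0,\dots,d\}$ subject to two requirements: $\vek u \in C_{i(\vek u)}$, and the Sperner face-condition \eqref{eq:indexedAsFace} holds. This is exactly where the covering hypothesis of the KKM lemma enters: if $\vek u$ lies on the face $\conv{\vek v_{i_1},\dots,\vek v_{i_n}}$, then by assumption $\vek u \in \bigcup_k C_{i_k}$, so at least one $i_k$ with $\vek u \in C_{i_k}$ is available and may be chosen as $i(\vek u)$.

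With the indices so assigned, Sperner's lemma applies and yields for every $n$ a Sperner subsimplex with vertices $\vek u_0^{(n)},\dots,\vek u_d^{(n)}$ such that $\vek u_i^{(n)} \in C_i$ for each $i$. By compactness of $\simp$ I would then pass to a subsequence along which $\vek u_0^{(n)}$ converges to some $\vek x^* \in \simp$. Because the diameter of the Sperner subsimplex tends to zero, every other sequence $(\vek u_i^{(n)})_n$ converges to the same limit $\vek x^*$. Each $C_i$ is closed, so $\vek x^* \in C_i$ for every $i=0,\dots,d$, and therefore $\vek x^* \in \bigcap_{i=0}^d C_i$, which is the conclusion of the lemma.

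The step I expect to require the most care is the verification that the labelling is genuinely well-defined for every vertex of every triangulation. Vertices in the interior of $\simp$ belong to no proper face, so the face condition imposes no restriction and any $i$ with $\vek u \in C_i$ is admissible; but vertices on a proper face require the KKM covering hypothesis to be invoked exactly as stated, and one must check that it gives at least one admissible index for every such vertex. A secondary, more routine obstacle is arranging the triangulations so that the mesh size indeed tends to zero in arbitrary dimension; for the bisection scheme of Theorem \ref{th:edgeLengths} this has already been established, so no new work is needed there.
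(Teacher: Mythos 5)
Your proposal is correct and follows essentially the same route as the paper: label the vertices of ever finer triangulations by the $C_i$ they lie in (using the covering hypothesis to satisfy the Sperner face condition), apply Sperner's lemma, and pass to the limit. In fact your version is slightly more careful than the paper's, which tacitly assumes a nested sequence of halving Sperner simplices and omits the explicit appeal to compactness and to the closedness of the $C_i$ that your subsequence argument supplies.
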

 Let   $\simp :=\text{conv}(\vek v_0,...\vek v_d)$ be a simplex and $\{C_i\}, \quad i=0,...d$ a set of sets with the property 
 \begin{equation}\label{eq:CiCover}
\text{conv}(\vek v_{i_1},...\vek v_{i_n}) \subset \bigcup_{i_1,...,i_n}C_{i_k}
\end{equation}
for any $\{i_1,...,i_n\} \subset \{0,1,...,d\}$.
Then there is a point $\vek u$ that is in all $C_i$ 
\begin{equation}
\vek u\in C_i, \quad i=0,...,d.
\end{equation}
\begin{proof}
Let   $\{\simp_j\}$ be a triangulation of $\simp$  and let each corner of it be indexed with the number of the $C_i$ it is in. Such an indexing exists as each corner is in at least one $C_i$ by \eqref{eq:CiCover}, and also by \eqref{eq:CiCover}, prerequisite \eqref{eq:indexedAsFace} is fulfilled. Thus, one $\simp_j$ of the triangulation is Sperner, meaning that it contains elements of all $C_i$.\\
By bisecting the triangulation, there is a sequence of Sperner simplices in which each element has  half the  diameter as its precessor. Thus the diameter of this sequence of Sperner simplices converges to zero, and there is a point $\vek u$ that is in all $C_i$.
\end{proof}

\begin{theorem}[Brouwer's Fixed Point theorem]\label{thm:brouwerSimp}
 Let 
 \begin{equation}\label{}
F:\quad \simp \longrightarrow \simp 
\end{equation}
be a continuous mapping. Then there is an $\vek x \in \simp$ with 
$$ F(\vek x) = \vek x .$$
\end{theorem}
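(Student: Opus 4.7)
The plan is to reduce Brouwer's theorem on $\simp$ to the KKM Lemma (Theorem \ref{th:kkm}), exactly along the lines sketched in the Overview. The bridge is the family $\{C_i\}_{i=0}^d$ already defined in Equation \eqref{eq:C_i}, where $C_i = \{\vek u \in \simp : \lambda_i(\vek F(\vek u)) \le \lambda_i(\vek u)\}$ and $\lambda_i$ denotes the $i$-th barycentric coordinate. Continuity of $\vek F$ together with continuity of the barycentric coordinate functions makes each $C_i$ closed, which is all the regularity KKM needs.

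First I would verify the KKM covering condition \eqref{eq:CiCover}. Fix a face $\conv{\vek v_{i_1},\ldots,\vek v_{i_n}}$ and a point $\vek x$ in it. Since $\lambda_j(\vek x) = 0$ for $j \notin \{i_1,\ldots,i_n\}$, the surviving coordinates satisfy $\sum_{k=1}^n \lambda_{i_k}(\vek x) = 1$. Suppose, for contradiction, that $\vek x \notin C_{i_k}$ for every $k$; then $\lambda_{i_k}(\vek F(\vek x)) > \lambda_{i_k}(\vek x)$ for every $k$. Summing over $k$ gives $\sum_{k=1}^n \lambda_{i_k}(\vek F(\vek x)) > 1$, which contradicts $\sum_{j=0}^d \lambda_j(\vek F(\vek x)) = 1$ with nonnegative summands (valid because $\vek F(\vek x) \in \simp$). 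Hence $\vek x \in C_{i_k}$ for some $k$, establishing the covering property.

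Next I would apply the KKM Lemma to obtain a point $\vek u \in \bigcap_{i=0}^d C_i$, so that $\lambda_i(\vek F(\vek u)) \le \lambda_i(\vek u)$ for every $i$. Summing over $i$, both sides equal $1$, which forces equality term by term: $\lambda_i(\vek F(\vek u)) = \lambda_i(\vek u)$ for all $i$. Because barycentric coordinates determine a point of $\simp$ uniquely, this gives $\vek F(\vek u) = \vek u$, so $\vek u$ is the desired fixed point.

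The genuinely interesting step is the second paragraph: the short algebraic summation argument that converts the metric-sounding condition "not mapped closer to $\vek v_i$" into the combinatorial KKM hypothesis. The other pieces — closedness of the $C_i$, and extracting $\vek F(\vek u) = \vek u$ from coordinate-wise equality — are routine, relying only on continuity of $\vek F$ and uniqueness of the barycentric representation. No additional machinery beyond Theorem \ref{th:kkm} is required.
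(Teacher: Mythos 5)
Your proposal is correct and follows essentially the same route as the paper: define the sets $C_i=\{\vek u : \lambda_i(\vek F(\vek u))\le\lambda_i(\vek u)\}$, verify the KKM covering condition by the summation/contradiction argument on each face, invoke the KKM Lemma to get a common point, and deduce coordinate-wise equality from $\sum_i\lambda_i=1$. Your write-up is in fact slightly more explicit than the paper's (you spell out the sum over the face indices and note closedness of the $C_i$), but the argument is the same.
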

\begin{proof}
Let $\lambda_i(\vek x)$ denote the $i$-th barycentric coordinate of $\vek x$, $i=0,...,d$. Define
\begin{equation}\label{eq:C_iAppendix}
 C_i:=\{\vek x: \lambda_i(F(\vek x )) \le \lambda_i(\vek x )\}, \quad i=0,...,d.
\end{equation}
In words, these are the sets of image points lying farther away from (precisely, not closer to) corner $ \vek v_i$ than the preimage.

It holds for the corners that $\vek v_i\in C_i$ and moreover \eqref{eq:CiCover},
 \begin{equation}
\text{conv}(\vek v_{i_1},...\vek v_{i_n}) \subset \bigcup_{i_1,...,i_n}C_{i_k}:
\end{equation}
Would that not be true, then for a $\vek x \in \text{conv}(\vek v_{i_1},...\vek v_{i_n})$ there would be no $\lambda_{i_k}$, $k=0,...n\le d$ such that 
\begin{align}
\lambda_i(F(\vek x )) \le& \lambda_i(\vek x )\\
\Leftrightarrow \quad \lambda_i(F(\vek x )) >& \lambda_i(\vek x ) \text{ for all }k=0,...n\le d
\end{align}
But this contradicts that for barycentric coordinates, $\sum_{i=0}^d\lambda_i=1$.\\
As those $C_i$ fulfil prerequisite equation  \eqref{eq:CiCover} of the lemma of Knaster, Kuratowski and Mazurkevic, it follows that there is a point $\vek x$ that is in all $C_i$. This means that for this $\vek x$,
 \begin{equation}
\lambda_i(F(\vek x ) \le \lambda_i(\vek x ) \text{ for all } i=0,...,d,
 \end{equation}
 and as  $\sum_{i=0}^d\lambda_i=1$,
 \begin{equation}\label{eq:C_iboundary}
\lambda_i(F(\vek x ) = \lambda_i(\vek x ) \text{ for all } i=0,...,d,
 \end{equation}
so $\vek x $ is a fixed point.

 \end{proof}
 
 \section{A brute force way to overcome Sperner simplex number explosion}\label{sec:reduceC_iBruteForce}
 To work around the redundant production of Sperner simplices around nodes that have multi-$C_i$ membership, it seems feasible to just discard all $C_i$ memberships of a node except one, e.g. the one with the smallest $i$. Repeating eq. \eqref{eq:C_iAppendix} for readability, 
\begin{equation*}
C_i:=\left\{\vek u \in S : \lambda_j(\vek F(\vek u))\le\lambda_j(\vek u)\right\},
\end{equation*}
the $C_i$ are the sets of points whose images are not closer to corner $\vek v_i$. For establishing the prerequisites of the KKM lemma \ref{th:kkm} and thus guarantee that at least one fixed point is found,  we have to show that 
\begin{enumerate}
\item the emerging indexing of nodes by the index of the remaining membership still has the property \eqref{eq:indexedAsFace} from the Sperner lemma, 
\begin{equation*}
\vek u_i \in \text{conv}(\vek v_{i_1},...\vek v_{i_n})\,\Rightarrow \,i\in \{i_1,...,i_n\} \end{equation*}
\item if $C_i$ memberships of a corner of multiple Sperner simplices  are discarded, at least one of those simplices remains Sperner. 
\end{enumerate}
The first  is given if not only \eqref{eq:CiCover} is fulfilled, i.e.
 \begin{equation*}
\text{conv}(\vek v_{i_1},...\vek v_{i_n}) \subset \bigcup_{i_1,...,i_n}C_{i_k},
\end{equation*}
meaning each face of $\simp$ is covered by its corners' $C_i$, but furthermore \emph{consist exclusively}
 of those $C_i$ (otherwise the wrong memberships may be discarded):
 \begin{equation*}
C_m\cap \text{conv}(\vek v_{i_1},...\vek v_{i_n}) =\emptyset \text{ iff }m\notin \{i_1,...,i_n\}.
\end{equation*}
In other words,   $C_m$ shall not intersect $\text{conv}(\vek v_{i_1},...\vek v_{i_n})$ unless $m\in\{i_1,...,i_n\}$. 
This is established in the following way: Let $\vek x \in \text{conv}(\vek v_{i_1},...\vek v_{i_n})$. Then for $m\notin\{i_1,...,i_n\}$ obviously $\lambda_m(\vek x)=0$. On the other hand, the minimum $\lambda_m(F(\vek x))=0$, which means that $F(\vek x)\in \text{conv}(\vek v_{i_1},...\vek v_{i_n})$ as $\vek x$. So  \eqref{eq:indexedAsFace} is given once all  $C_i$  memberships yielding from $\lambda_i(F(\vek x))=\lambda_m(\vek x)=0$ are discarded, and is not further affected if further memberships, e.g. all remaining memberships except one,  are discarded.
\begin{remark}
 $\lambda_m(\vek x)=0$ is a sufficient criterium for ignoring $C_m$ membership, because then either $\lambda_i(F(\vek x))=0$ or not, and in the latter case $\vek x $ is not in $C_m$  neither. \\
 Above argumentation assumed $\vek x \in \text{conv}(\vek v_{i_1},...\vek v_{i_n})$, $m\notin\{i_1,...,i_n\}$, which is is established by $\lambda_m(\vek x)=0$.
\end{remark}
Unfortunately, 
the second property is not given. Consider the $C_i$ plot Fig. \ref{fig:switchxy} of problem \eqref{eq:switchxy}. The interior points of the sets $C_2$ as well as $C_1$ are inside $C_0$ and so assigned to $C_0$. So using this reduction, there will be no Sperner simplices in the interior of $\simp$ and all fixed points along the diagonal will be lost.\\


\section{Calculation of the $C_2$-$C_0$ boundary for  $\vek F= \frac14\vek x + \frac 14 \vek 1 $}
\label{sec:CiBound}
The calculation of  the $C_2$-$C_0$ boundary for the problem in eq. \eqref{eq:x/4+1/4} is elementary but too long to be left to the reader. From
\begin{align*}\lambda_2(\vek x)-\lambda_2(F(\vek x))=&\lambda_0(\vek x)-\lambda_0(\vek F(\vek x))\\
=& 1-\lambda_2(\vek x) - \lambda_1(\vek x) - (1-\lambda_2(\vek F(\vek x)) -\lambda_1(\vek F(\vek x)))\\
\Leftrightarrow\quad 
2\lambda_2(\vek x)-2\lambda_2(\vek F(\vek x))=&-\lambda_1(\vek x)+\lambda_1(\vek F(\vek x)).
\end{align*}
Apply this to $$\vek F= \frac14\vek x + \frac 14 \vek 1 $$
using $\lambda_1=x_1$ and $\lambda_2=x_2$:
\begin{align*}
 2x_2 -2\left(\frac14x_2+\frac14\right)=&- x_1 -2\left(\frac14x_1+\frac14\right)\\
\Leftrightarrow\quad 4x_2 -2 =& -3x_1+1\\
\Leftrightarrow\quad x_2  =& -\frac12 x_1+\frac12.\\
\end{align*}

\bibliographystyle{plain}
\bibliography{./lit.bib}
\end{document}